\documentclass[12pt,reqno]{article}

\usepackage[usenames]{color}
\usepackage{amssymb}
\usepackage{amsthm}
\usepackage{graphicx}

\usepackage{booktabs}

\usepackage{amscd}
\usepackage{ifthen}

\usepackage{ tikz }
\usetikzlibrary{ calc }
\tikzset{
Square/.style = {
inner sep = 0 pt,
minimum width = 8 mm,
minimum height = 8 mm,
draw = black,
fill = none,
align = center
}}

\def\tn{\textnormal}
\newcommand{\ignore}[1]{}
\newcommand{\set}[1]{\left\{#1\right\}}
\def\D{\mathcal{D}}

\def\P{\mathcal{P}}
\def\T{\mathcal{T}}

\def\l{\ell}

\usepackage[colorlinks=true,
linkcolor=webgreen,
filecolor=webbrown,
citecolor=webgreen]{hyperref}

\definecolor{webgreen}{rgb}{0,.5,0}
\definecolor{webbrown}{rgb}{.6,0,0}

\usepackage{color}
\usepackage{fullpage}
\usepackage{float}

\usepackage{graphics,amsmath,amssymb}

\usepackage{amsfonts}
\usepackage{latexsym}
\usepackage{epsf}

\setlength{\textwidth}{6.5in}
\setlength{\oddsidemargin}{.1in}
\setlength{\evensidemargin}{.1in}
\setlength{\topmargin}{-.1in}
\setlength{\textheight}{8.4in}

\newcommand{\seqnum}[1]{\href{http://oeis.org/#1}{\underline{#1}}}

\usepackage[noblocks]{authblk}

\allowdisplaybreaks

\begin{document}

\theoremstyle{plain}
\newtheorem{theorem}{Theorem}
\newtheorem{corollary}[theorem]{Corollary}
\newtheorem{lemma}[theorem]{Lemma}
\newtheorem{proposition}[theorem]{Proposition}

\theoremstyle{definition}
\newtheorem{definition}[theorem]{Definition}
\newtheorem{example}[theorem]{Example}
\newtheorem{conjecture}[theorem]{Conjecture}
\newtheorem{notation}[theorem]{Notation}
\newtheorem{remark}[theorem]{Remark}

\theoremstyle{remark}

\begin{center}
\vskip 1cm{\LARGE\bf 
Some Properties and \\
Combinatorial Implications of \\
\vskip .1in
 Weighted Small Schr\"oder Numbers
}
\vskip 1cm
\large
Yu Hin (Gary) Au\\
Department of Mathematics and Statistics\\
University of Saskatchewan\\
Saskatoon, SK\\
Canada \\
\href{mailto:au@math.usask.ca}{\tt au@math.usask.ca} \\
\end{center}

\begin{abstract}
The $n^{\tn{th}}$ small Schr\"oder number is $s(n) = \sum_{k \geq 0} s(n,k)$, where $s(n,k)$ denotes the number of plane rooted trees with $n$ leaves and $k$ internal nodes that each has at least two children. In this manuscript, we focus on the weighted small Schr\"oder numbers $s_d(n) = \sum_{k \geq 0} s(n,k) d^k$, where $d$ is an arbitrary fixed real number. We provide recursive and asymptotic formulas for $s_d(n)$, as well as some identities and combinatorial interpretations for these numbers. We also establish connections between $s_d(n)$ and several families of Dyck paths.
\end{abstract}

\section{Introduction}\label{sec11}

\subsection{Small Schr\"oder numbers}

The small Schr\"oder numbers, denoted $s(n)$ for every $n \geq 1$ gives the sequence
\begin{center}
\begin{tabular}{l|rrrrrrrrr}
$n$ & 1 & 2 & 3 & 4 & 5 & 6 & 7 & 8 & $\cdots$ \\
\hline
$s(n)$ & 1 & 1 & 3 & 11 & 45 & 197 & 903 & 4279 & $\cdots$ \\
\end{tabular}
\end{center}
(\seqnum{A001003} in the On-line Encyclopedia of Integer Sequences (OEIS)~\cite{OEIS}). This sequence has been studied extensively and has many combinatorial interpretations (see, for instance,~\cite[Exercise 6.39]{Stanley99}). We list two below:

\begin{itemize}
\item
Let $\T_n$ denote the set of \emph{Schr\"oder trees} with $n$ leaves, which are plane rooted trees where each internal node has at least $2$ children. Then $s(n) = |\T_n|$ for every $n \geq 1$. Figure~\ref{fig1} shows the $s(4) = 11$ Schr\"oder trees with $4$ leaves.

\def\fig1scale{0.29}
\begin{center}
\begin{figure}[h]
\[
\begin{array}{c}

\begin{tikzpicture}
[scale=\fig1scale, yscale=0.8, thick,main node/.style={circle,inner sep=0.5mm,draw,font=\small\sffamily}]
  \node[main node] at (4,6) (0) {};
      \node[main node] at (2,4) (1) {};
          \node[main node] at (3.33,4) (2) {};
          \node[main node] at (4.67,4) (3) {};
    \node[main node] at (6,4) (4) {};

  \path[every node/.style={font=\sffamily}]
    (0) edge (1)
    (0) edge (2)
        (0) edge (3)
        (0) edge (4)
;
\end{tikzpicture}

\qquad
\begin{tikzpicture}
[scale=\fig1scale, yscale=0.8, thick,main node/.style={circle,inner sep=0.5mm,draw,font=\small\sffamily}]
  \node[main node] at (4,6) (0) {};
      \node[main node] at (2,4) (1) {};
          \node[main node] at (4,4) (2) {};
          \node[main node] at (6,4) (3) {};
    \node[main node] at (1,2) (11) {};
        \node[main node] at (3,2) (12) {};

  \path[every node/.style={font=\sffamily}]
    (0) edge (1)
    (0) edge (2)
    (0) edge (3)    
        (1) edge (11)
        (1) edge (12)
        ;
\end{tikzpicture}
\qquad

\begin{tikzpicture}
[scale=\fig1scale, yscale=0.8, thick,main node/.style={circle,inner sep=0.5mm,draw,font=\small\sffamily}]
  \node[main node] at (4,6) (0) {};
      \node[main node] at (2,4) (1) {};
          \node[main node] at (4,4) (2) {};
          \node[main node] at (6,4) (3) {};
    \node[main node] at (3,2) (21) {};
        \node[main node] at (5,2) (22) {};

  \path[every node/.style={font=\sffamily}]
    (0) edge (1)
    (0) edge (2)
    (0) edge (3)    
        (2) edge (21)
        (2) edge (22)
        ;
\end{tikzpicture}
\qquad

\begin{tikzpicture}
[scale=\fig1scale, yscale=0.8, thick,main node/.style={circle,inner sep=0.5mm,draw,font=\small\sffamily}]
  \node[main node] at (4,6) (0) {};
      \node[main node] at (2,4) (1) {};
          \node[main node] at (4,4) (2) {};
          \node[main node] at (6,4) (3) {};
    \node[main node] at (5,2) (31) {};
        \node[main node] at (7,2) (32) {};

  \path[every node/.style={font=\sffamily}]
    (0) edge (1)
    (0) edge (2)
    (0) edge (3)    
        (3) edge (31)
        (3) edge (32)
        ;
\end{tikzpicture}

\\
\\

\begin{tikzpicture}
[scale=\fig1scale, yscale=0.8, thick,main node/.style={circle,inner sep=0.5mm,draw,font=\small\sffamily}]
  \node[main node] at (4,6) (0) {};
      \node[main node] at (2,4) (1) {};
          \node[main node] at (6,4) (2) {};
    \node[main node] at (1,2) (11) {};
        \node[main node] at (2,2) (12) {};
        \node[main node] at (3,2) (13) {};

  \path[every node/.style={font=\sffamily}]
    (0) edge (1)
    (0) edge (2)
        (1) edge (11)
        (1) edge (12)
        (1) edge (13)
        ;
\end{tikzpicture}
\qquad

\begin{tikzpicture}
[scale=\fig1scale, yscale=0.8, thick,main node/.style={circle,inner sep=0.5mm,draw,font=\small\sffamily}]
  \node[main node] at (4,6) (0) {};
      \node[main node] at (2,4) (1) {};
          \node[main node] at (6,4) (2) {};
    \node[main node] at (5,2) (21) {};
        \node[main node] at (6,2) (22) {};
        \node[main node] at (7,2) (23) {};

  \path[every node/.style={font=\sffamily}]
    (0) edge (1)
    (0) edge (2)
        (2) edge (21)
        (2) edge (22)
        (2) edge (23)
        ;
\end{tikzpicture}
\qquad

\begin{tikzpicture}
[scale=\fig1scale, yscale=0.8, thick,main node/.style={circle,inner sep=0.5mm,draw,font=\small\sffamily}]
  \node[main node] at (4,6) (0) {};
      \node[main node] at (2,4) (1) {};
          \node[main node] at (6,4) (2) {};
    \node[main node] at (1,2) (11) {};
        \node[main node] at (3,2) (12) {};
    \node[main node] at (5,2) (21) {};
        \node[main node] at (7,2) (22) {};

  \path[every node/.style={font=\sffamily}]
    (0) edge (1)
    (0) edge (2)
        (1) edge (11)
        (1) edge (12)
        (2) edge (21)
        (2) edge (22)
;
\end{tikzpicture}

\\
\\

\begin{tikzpicture}
[scale=\fig1scale, yscale=0.8, thick,main node/.style={circle,inner sep=0.5mm,draw,font=\small\sffamily}]
  \node[main node] at (4,6) (0) {};
      \node[main node] at (2,4) (1) {};
          \node[main node] at (6,4) (2) {};
    \node[main node] at (1,2) (11) {};
        \node[main node] at (3,2) (12) {};
    \node[main node] at (0.5,0) (111) {};
        \node[main node] at (1.5,0) (112) {};

  \path[every node/.style={font=\sffamily}]
    (0) edge (1)
    (0) edge (2)
        (1) edge (11)
        (1) edge (12)
        (11) edge (111)
        (11) edge (112)
;
\end{tikzpicture}
\qquad

\begin{tikzpicture}
[scale=\fig1scale, yscale=0.8, thick,main node/.style={circle,inner sep=0.5mm,draw,font=\small\sffamily}]
  \node[main node] at (4,6) (0) {};
      \node[main node] at (2,4) (1) {};
          \node[main node] at (6,4) (2) {};
    \node[main node] at (1,2) (11) {};
        \node[main node] at (3,2) (12) {};
    \node[main node] at (2.5,0) (121) {};
        \node[main node] at (2.5,0) (122) {};

  \path[every node/.style={font=\sffamily}]
    (0) edge (1)
    (0) edge (2)
        (1) edge (11)
        (1) edge (12)
        (12) edge (121)
        (12) edge (122)
;
\end{tikzpicture}

\qquad

\begin{tikzpicture}
[scale=\fig1scale, yscale=0.8, thick,main node/.style={circle,inner sep=0.5mm,draw,font=\small\sffamily}]
  \node[main node] at (4,6) (0) {};
      \node[main node] at (2,4) (1) {};
          \node[main node] at (6,4) (2) {};
    \node[main node] at (4.5,0) (211) {};
        \node[main node] at (5.5,0) (212) {};
    \node[main node] at (5,2) (21) {};
        \node[main node] at (7,2) (22) {};

  \path[every node/.style={font=\sffamily}]
    (0) edge (1)
    (0) edge (2)
        (2) edge (21)
        (2) edge (22)
        (21) edge (211)
        (21) edge (212)
;
\end{tikzpicture}
\qquad

\begin{tikzpicture}
[scale=\fig1scale, yscale=0.8, thick,main node/.style={circle,inner sep=0.5mm,draw,font=\small\sffamily}]
  \node[main node] at (4,6) (0) {};
      \node[main node] at (2,4) (1) {};
          \node[main node] at (6,4) (2) {};
    \node[main node] at (6.5,0) (221) {};
        \node[main node] at (7.5,0) (222) {};
    \node[main node] at (5,2) (21) {};
        \node[main node] at (7,2) (22) {};

  \path[every node/.style={font=\sffamily}]
    (0) edge (1)
    (0) edge (2)
        (2) edge (21)
        (2) edge (22)
        (22) edge (221)
        (22) edge (222)
;
\end{tikzpicture}

\end{array}
\]

\caption{The $s(4)= 11$ Schr\"oder trees with $4$ leaves}\label{fig1}
\end{figure}
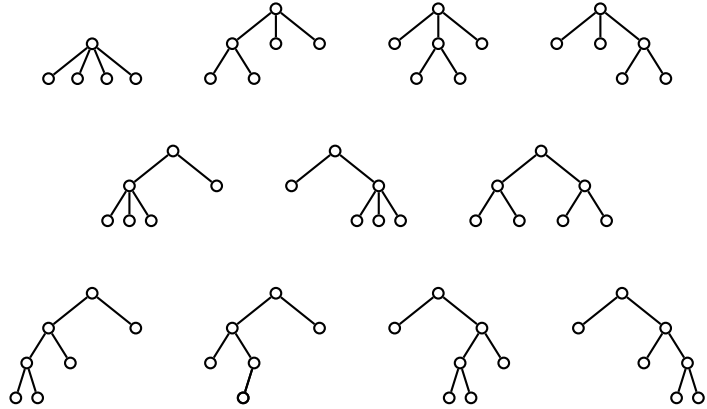
\end{center}

\item
Let $\P_n$ be the set of \emph{small Schr\"oder paths} from $(0,0)$ to $(2n-2,0)$, which are lattice paths that
\begin{itemize}
\item[(P1)]
use only up steps $U = (1,1)$, down steps $D = (1,-1)$, and flat steps $F = (2,0)$;
\item[(P2)]
remain on or above the $x$-axis;
\item[(P3)]
doe not contain an $F$ step on the $x$-axis.
\end{itemize}
Then $s(n) = |\P_n|$ for every $n \geq 1$. Figure~\ref{fig2} lists the $s(4) = 11$ small Schr\"oder paths from $(0,0)$ to $(6,0)$.

\def\fig2scale{0.4}
\begin{center}
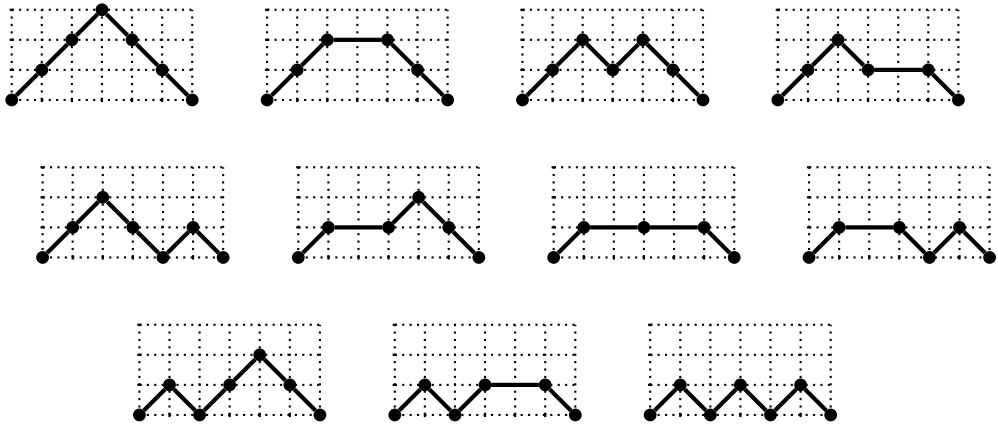
\begin{figure}[h]

\[
\begin{array}{c}
\begin{tikzpicture}[scale = \fig2scale, xscale=1,yscale=1, font=\scriptsize\sffamily, thick,main node/.style={circle,inner sep=0.5mm,draw, fill}]
\def\xlb{0}; \def\xub{6}; \def\ylb{0}; \def\yub{3}; \def\buf{0};
\foreach \x in {\xlb ,...,\xub}
{    \ifthenelse{\NOT 0 = \x}{\draw[thick](\x ,-2pt) -- (\x ,2pt);}{}
\draw[dotted, thick](\x,\ylb- \buf) -- (\x,\yub + \buf);}
\foreach \y in {\ylb ,...,\yub}
{    \ifthenelse{\NOT 0 = \y}{\draw[thick](-2pt, \y) -- (2pt, \y);}{}
\draw[dotted, thick](\xlb- \buf, \y) -- (\xub + \buf, \y);}

\node[main node] at (0,0) (0) {};
\node[main node] at (1,1) (1) {};
\node[main node] at (2,2) (2) {};
\node[main node] at (3,3) (3) {};
\node[main node] at (4,2) (4) {};
\node[main node] at (5,1) (5) {};
\node[main node] at (6,0) (6) {};
\draw[ultra thick] (0) -- (1) -- (2) -- (3)-- (4)-- (5)-- (6);

\end{tikzpicture}
\qquad
\begin{tikzpicture}[scale = \fig2scale, xscale=1,yscale=1, font=\scriptsize\sffamily, thick,main node/.style={circle,inner sep=0.5mm,draw, fill}]
\def\xlb{0}; \def\xub{6}; \def\ylb{0}; \def\yub{3}; \def\buf{0};
\foreach \x in {\xlb ,...,\xub}
{    \ifthenelse{\NOT 0 = \x}{\draw[thick](\x ,-2pt) -- (\x ,2pt);}{}
\draw[dotted, thick](\x,\ylb- \buf) -- (\x,\yub + \buf);}
\foreach \y in {\ylb ,...,\yub}
{    \ifthenelse{\NOT 0 = \y}{\draw[thick](-2pt, \y) -- (2pt, \y);}{}
\draw[dotted, thick](\xlb- \buf, \y) -- (\xub + \buf, \y);}
\node[main node] at (0,0) (0) {};
\node[main node] at (1,1) (1) {};
\node[main node] at (2,2) (2) {};
\node[main node] at (4,2) (3) {};
\node[main node] at (5,1) (4) {};
\node[main node] at (6,0) (5) {};
\draw[ultra thick] (0) -- (1) -- (2) -- (3)-- (4)-- (5);

\end{tikzpicture}
\qquad
\begin{tikzpicture}[scale = \fig2scale, xscale=1,yscale=1, font=\scriptsize\sffamily, thick,main node/.style={circle,inner sep=0.5mm,draw, fill}]
\def\xlb{0}; \def\xub{6}; \def\ylb{0}; \def\yub{3}; \def\buf{0};
\foreach \x in {\xlb ,...,\xub}
{    \ifthenelse{\NOT 0 = \x}{\draw[thick](\x ,-2pt) -- (\x ,2pt);}{}
\draw[dotted, thick](\x,\ylb- \buf) -- (\x,\yub + \buf);}
\foreach \y in {\ylb ,...,\yub}
{    \ifthenelse{\NOT 0 = \y}{\draw[thick](-2pt, \y) -- (2pt, \y);}{}
\draw[dotted, thick](\xlb- \buf, \y) -- (\xub + \buf, \y);}
\node[main node] at (0,0) (0) {};
\node[main node] at (1,1) (1) {};
\node[main node] at (2,2) (2) {};
\node[main node] at (3,1) (3) {};
\node[main node] at (4,2) (4) {};
\node[main node] at (5,1) (5) {};
\node[main node] at (6,0) (6) {};
\draw[ultra thick] (0) -- (1) -- (2) -- (3)-- (4)-- (5)-- (6);

\end{tikzpicture}
\qquad
\begin{tikzpicture}[scale = \fig2scale, xscale=1,yscale=1, font=\scriptsize\sffamily, thick,main node/.style={circle,inner sep=0.5mm,draw, fill}]
\def\xlb{0}; \def\xub{6}; \def\ylb{0}; \def\yub{3}; \def\buf{0};
\foreach \x in {\xlb ,...,\xub}
{    \ifthenelse{\NOT 0 = \x}{\draw[thick](\x ,-2pt) -- (\x ,2pt);}{}
\draw[dotted, thick](\x,\ylb- \buf) -- (\x,\yub + \buf);}
\foreach \y in {\ylb ,...,\yub}
{    \ifthenelse{\NOT 0 = \y}{\draw[thick](-2pt, \y) -- (2pt, \y);}{}
\draw[dotted, thick](\xlb- \buf, \y) -- (\xub + \buf, \y);}
\node[main node] at (0,0) (0) {};
\node[main node] at (1,1) (1) {};
\node[main node] at (2,2) (2) {};
\node[main node] at (3,1) (3) {};
\node[main node] at (5,1) (4) {};
\node[main node] at (6,0) (5) {};
\draw[ultra thick] (0) -- (1) -- (2) -- (3)-- (4)-- (5);

\end{tikzpicture}
\qquad

\\
\\
\begin{tikzpicture}[scale = \fig2scale, xscale=1,yscale=1, font=\scriptsize\sffamily, thick,main node/.style={circle,inner sep=0.5mm,draw, fill}]
\def\xlb{0}; \def\xub{6}; \def\ylb{0}; \def\yub{3}; \def\buf{0};
\foreach \x in {\xlb ,...,\xub}
{    \ifthenelse{\NOT 0 = \x}{\draw[thick](\x ,-2pt) -- (\x ,2pt);}{}
\draw[dotted, thick](\x,\ylb- \buf) -- (\x,\yub + \buf);}
\foreach \y in {\ylb ,...,\yub}
{    \ifthenelse{\NOT 0 = \y}{\draw[thick](-2pt, \y) -- (2pt, \y);}{}
\draw[dotted, thick](\xlb- \buf, \y) -- (\xub + \buf, \y);}
\node[main node] at (0,0) (0) {};
\node[main node] at (1,1) (1) {};
\node[main node] at (2,2) (2) {};
\node[main node] at (3,1) (3) {};
\node[main node] at (4,0) (4) {};
\node[main node] at (5,1) (5) {};
\node[main node] at (6,0) (6) {};
\draw[ultra thick] (0) -- (1) -- (2) -- (3)-- (4)-- (5)-- (6);

\end{tikzpicture}
\qquad
\begin{tikzpicture}[scale = \fig2scale, xscale=1,yscale=1, font=\scriptsize\sffamily, thick,main node/.style={circle,inner sep=0.5mm,draw, fill}]
\def\xlb{0}; \def\xub{6}; \def\ylb{0}; \def\yub{3}; \def\buf{0};
\foreach \x in {\xlb ,...,\xub}
{    \ifthenelse{\NOT 0 = \x}{\draw[thick](\x ,-2pt) -- (\x ,2pt);}{}
\draw[dotted, thick](\x,\ylb- \buf) -- (\x,\yub + \buf);}
\foreach \y in {\ylb ,...,\yub}
{    \ifthenelse{\NOT 0 = \y}{\draw[thick](-2pt, \y) -- (2pt, \y);}{}
\draw[dotted, thick](\xlb- \buf, \y) -- (\xub + \buf, \y);}
\node[main node] at (0,0) (0) {};
\node[main node] at (1,1) (1) {};
\node[main node] at (3,1) (2) {};
\node[main node] at (4,2) (3) {};
\node[main node] at (5,1) (4) {};
\node[main node] at (6,0) (5) {};
\draw[ultra thick] (0) -- (1) -- (2) -- (3)-- (4)-- (5);

\end{tikzpicture}
\qquad
\begin{tikzpicture}[scale = \fig2scale, xscale=1,yscale=1, font=\scriptsize\sffamily, thick,main node/.style={circle,inner sep=0.5mm,draw, fill}]
\def\xlb{0}; \def\xub{6}; \def\ylb{0}; \def\yub{3}; \def\buf{0};
\foreach \x in {\xlb ,...,\xub}
{    \ifthenelse{\NOT 0 = \x}{\draw[thick](\x ,-2pt) -- (\x ,2pt);}{}
\draw[dotted, thick](\x,\ylb- \buf) -- (\x,\yub + \buf);}
\foreach \y in {\ylb ,...,\yub}
{    \ifthenelse{\NOT 0 = \y}{\draw[thick](-2pt, \y) -- (2pt, \y);}{}
\draw[dotted, thick](\xlb- \buf, \y) -- (\xub + \buf, \y);}
\node[main node] at (0,0) (0) {};
\node[main node] at (1,1) (1) {};
\node[main node] at (3,1) (2) {};
\node[main node] at (5,1) (3) {};
\node[main node] at (6,0) (4) {};
\draw[ultra thick] (0) -- (1) -- (2) -- (3)-- (4);

\end{tikzpicture}
\qquad
\begin{tikzpicture}[scale = \fig2scale, xscale=1,yscale=1, font=\scriptsize\sffamily, thick,main node/.style={circle,inner sep=0.5mm,draw, fill}]
\def\xlb{0}; \def\xub{6}; \def\ylb{0}; \def\yub{3}; \def\buf{0};
\foreach \x in {\xlb ,...,\xub}
{    \ifthenelse{\NOT 0 = \x}{\draw[thick](\x ,-2pt) -- (\x ,2pt);}{}
\draw[dotted, thick](\x,\ylb- \buf) -- (\x,\yub + \buf);}
\foreach \y in {\ylb ,...,\yub}
{    \ifthenelse{\NOT 0 = \y}{\draw[thick](-2pt, \y) -- (2pt, \y);}{}
\draw[dotted, thick](\xlb- \buf, \y) -- (\xub + \buf, \y);}
\node[main node] at (0,0) (0) {};
\node[main node] at (1,1) (1) {};
\node[main node] at (3,1) (2) {};
\node[main node] at (4,0) (3) {};
\node[main node] at (5,1) (4) {};
\node[main node] at (6,0) (5) {};
\draw[ultra thick] (0) -- (1) -- (2) -- (3)-- (4)-- (5);

\end{tikzpicture}
\\
\\
\begin{tikzpicture}[scale = \fig2scale, xscale=1,yscale=1, font=\scriptsize\sffamily, thick,main node/.style={circle,inner sep=0.5mm,draw, fill}]
\def\xlb{0}; \def\xub{6}; \def\ylb{0}; \def\yub{3}; \def\buf{0};
\foreach \x in {\xlb ,...,\xub}
{    \ifthenelse{\NOT 0 = \x}{\draw[thick](\x ,-2pt) -- (\x ,2pt);}{}
\draw[dotted, thick](\x,\ylb- \buf) -- (\x,\yub + \buf);}
\foreach \y in {\ylb ,...,\yub}
{    \ifthenelse{\NOT 0 = \y}{\draw[thick](-2pt, \y) -- (2pt, \y);}{}
\draw[dotted, thick](\xlb- \buf, \y) -- (\xub + \buf, \y);}
\node[main node] at (0,0) (0) {};
\node[main node] at (1,1) (1) {};
\node[main node] at (2,0) (2) {};
\node[main node] at (3,1) (3) {};
\node[main node] at (4,2) (4) {};
\node[main node] at (5,1) (5) {};
\node[main node] at (6,0) (6) {};
\draw[ultra thick] (0) -- (1) -- (2) -- (3)-- (4)-- (5)-- (6);

\end{tikzpicture}
\qquad
\begin{tikzpicture}[scale = \fig2scale, xscale=1,yscale=1, font=\scriptsize\sffamily, thick,main node/.style={circle,inner sep=0.5mm,draw, fill}]
\def\xlb{0}; \def\xub{6}; \def\ylb{0}; \def\yub{3}; \def\buf{0};
\foreach \x in {\xlb ,...,\xub}
{    \ifthenelse{\NOT 0 = \x}{\draw[thick](\x ,-2pt) -- (\x ,2pt);}{}
\draw[dotted, thick](\x,\ylb- \buf) -- (\x,\yub + \buf);}
\foreach \y in {\ylb ,...,\yub}
{    \ifthenelse{\NOT 0 = \y}{\draw[thick](-2pt, \y) -- (2pt, \y);}{}
\draw[dotted, thick](\xlb- \buf, \y) -- (\xub + \buf, \y);}
\node[main node] at (0,0) (0) {};
\node[main node] at (1,1) (1) {};
\node[main node] at (2,0) (2) {};
\node[main node] at (3,1) (3) {};
\node[main node] at (5,1) (4) {};
\node[main node] at (6,0) (5) {};
\draw[ultra thick] (0) -- (1) -- (2) -- (3)-- (4)-- (5);

\end{tikzpicture}
\qquad
\begin{tikzpicture}[scale = \fig2scale, xscale=1,yscale=1, font=\scriptsize\sffamily, thick,main node/.style={circle,inner sep=0.5mm,draw, fill}]
\def\xlb{0}; \def\xub{6}; \def\ylb{0}; \def\yub{3}; \def\buf{0};
\foreach \x in {\xlb ,...,\xub}
{    \ifthenelse{\NOT 0 = \x}{\draw[thick](\x ,-2pt) -- (\x ,2pt);}{}
\draw[dotted, thick](\x,\ylb- \buf) -- (\x,\yub + \buf);}
\foreach \y in {\ylb ,...,\yub}
{    \ifthenelse{\NOT 0 = \y}{\draw[thick](-2pt, \y) -- (2pt, \y);}{}
\draw[dotted, thick](\xlb- \buf, \y) -- (\xub + \buf, \y);}
\node[main node] at (0,0) (0) {};
\node[main node] at (1,1) (1) {};
\node[main node] at (2,0) (2) {};
\node[main node] at (3,1) (3) {};
\node[main node] at (4,0) (4) {};
\node[main node] at (5,1) (5) {};
\node[main node] at (6,0) (6) {};
\draw[ultra thick] (0) -- (1) -- (2) -- (3)-- (4)-- (5)-- (6);

\end{tikzpicture}
\qquad

\end{array}
\]

\caption{The $s(4) = 11$ small Schr\"oder paths from $(0,0)$ to $(6,0)$}\label{fig2}
\end{figure}
\end{center}
\end{itemize}

To see that $|\T_n| = |\P_n|$ for every $n \geq 1$, we describe the well-known ``walk around the tree'' procedure that maps plane rooted trees to lattice paths.

\begin{definition}\label{defnTreeToPath}
Given a Schr\"oder tree $T \in \T_n$, construct the path $\Psi(T) \in \P_n$ as follows:

\begin{enumerate}
\item
Suppose $T$ has $q$ nodes. We perform a preorder traversal of $T$, and label the nodes $a_1, a_2, \ldots, a_q$ in that order.
\item
Notice that $a_1$ must be the root of $T$. For every $i \geq 2$, define the function
\[
\psi(a_i) = 
\begin{cases}
U & \tn{if $a_i$ is the leftmost child of its parent;}\\
D & \tn{if $a_i$ is the rightmost child of its parent;}\\
F & \tn{otherwise.}
\end{cases}
\]
\item
Define
\[
\Psi(T) = \psi(a_2) \psi(a_3) \cdots \psi(a_q).
\]
\end{enumerate}
\end{definition}

Figure~\ref{fig3} illustrates the mapping $\Psi$ for a particular tree. It is not hard to check that $\Psi : \T_n \to \P_n$ is indeed a bijection.

\begin{center}
\begin{figure}[h]
\[
\begin{array}{ccc}

\begin{tikzpicture}
[scale=0.6,xscale=1.3, thick,main node/.style={circle,inner sep=0.3mm,draw,font=\scriptsize\sffamily}]
  \node[main node] at (3.5,6) (0) {$a_1$};
    \node[main node, label={[label distance=-0.1cm]180:$U$}] at (2,4) (l) {$a_2$};
    \node[main node, label={[label distance=-0.1cm]180:$U$}] at (0.5,2) (l1){$a_3$};
    \node[main node, label={[label distance=-0.1cm]180:$F$}] at (2,2) (l2){$a_4$};
    \node[main node, label={[label distance=-0.1cm]180:$U$}] at (1,0) (l21){$a_5$};
    \node[main node, label={[label distance=-0.1cm]180:$D$}] at (3,0) (l22){$a_6$};
    \node[main node, label={[label distance=-0.1cm]180:$D$}] at (3.5,2) (l3){$a_7$};
    \node[main node, label={[label distance=-0.1cm]180:$D$}] at (5,4) (r) {$a_8$};

  \path[every node/.style={font=\sffamily}]
    (0) edge (l)
    (0) edge (r)
                (l) edge (l1)
                 (l) edge (l2)
            (l) edge (l3)
(l2) edge (l21)
(l2) edge (l22);
\end{tikzpicture}

&
\raisebox{2cm}{$\longrightarrow$}
&
\raisebox{1.25cm}{
\begin{tikzpicture}[scale = 0.5, xscale=1,yscale=1, font=\scriptsize\sffamily, thick,main node/.style={circle,inner sep=0.5mm,draw, fill}]
\def\xlb{0}; \def\xub{8}; \def\ylb{0}; \def\yub{3}; \def\buf{0};
\foreach \x in {\xlb ,...,\xub}
{    \ifthenelse{\NOT 0 = \x}{\draw[thick](\x ,-2pt) -- (\x ,2pt);}{}
\draw[dotted, thick](\x,\ylb- \buf) -- (\x,\yub + \buf);}
\foreach \y in {\ylb ,...,\yub}
{    \ifthenelse{\NOT 0 = \y}{\draw[thick](-2pt, \y) -- (2pt, \y);}{}
\draw[dotted, thick](\xlb- \buf, \y) -- (\xub + \buf, \y);}

\node[main node] at (0,0) (0) {};
\node[main node] at (1,1) (1) {};
\node[main node] at (2,2) (2) {};
\node[main node] at (4,2) (3) {};
\node[main node] at (5,3) (4) {};
\node[main node] at (6,2) (5) {};
\node[main node] at (7,1) (6) {};
\node[main node] at (8,0) (7) {};
\draw[ultra thick] (0) -- (1) -- (2) -- (3)-- (4)-- (5)-- (6)-- (7);

\end{tikzpicture}
}
\\
\\
T \in \T_5 & & \Psi(T) \in \P_5\\
\end{array}
\]
\caption{Illustrating the tree-to-path mapping $\Psi$ (Definition~\ref{defnTreeToPath})}\label{fig3}
\end{figure}
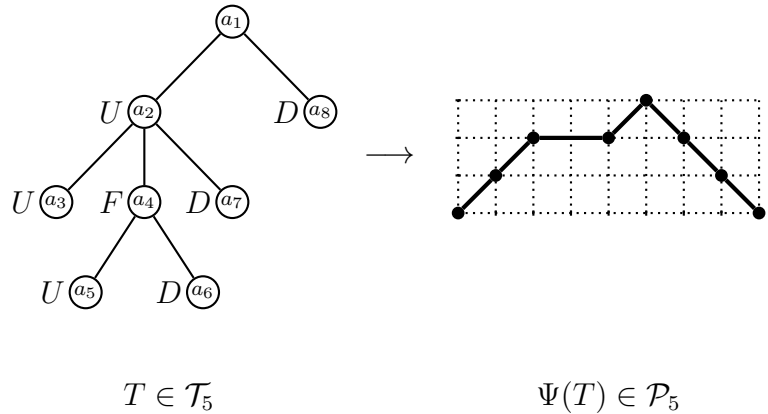
\end{center}

For more historical context, properties, and combinatorial interpretations of the small Schr\"oder numbers, the reader may refer to~\cite{Stanley97, Stanley99, ShapiroS00, Gessel09}.

\subsection{Weighted small Schr\"oder numbers}

Before we describe the weighted small Schr\"oder numbers, let us take a closer look at the Schr\"oder trees $\T_n$. Given integers $n \geq 1$ and $0 \leq k < n$, let $\T_{n,k} \subseteq \T_n$ be the set of Schr\"oder trees with $n$ leaves and $k$ internal nodes. Furthermore, let $s(n,k) = |T_{n,k}|$. For instance, Figure~\ref{fig1} shows that $s(4,0) = 0$, $s(4,1) = 1$, and $s(4,2) = s(4,3) = 5$. More generally, the numbers $s(n,k)$ produce the following triangle (\seqnum{A086810} in the OEIS).

\begin{center}
\begin{tabular}{r|rrrrrrr}
$s(n,k)$ & $k= 0$ & 1 & 2 & 3 & 4 & 5 & $\cdots$ \\
\hline
$n=1$  & 1 & & & & & &\\
2  &0 &1 & & & & &\\
3  &0 & 1& 2& & & &\\
4  &0 &1 & 5& 5& && \\
5  &0 & 1&9 &21 &14 && \\
6  &0  &1 &14 &56 &84 &42 &\\
$\vdots$  & $\vdots$ &$\vdots$ &$\vdots$ &$\vdots$ &$\vdots$ &$\vdots$ &$ \ddots$ \\
\end{tabular}
\end{center}
In general,
\[
s(n,k) = 
\begin{cases}
1 & \tn{if $n=1$ and $k=0$;}\\
\frac{1}{n-1} \binom{n-1}{k} \binom{n+k-1}{n} & \tn{if $n \geq 2$.}
\end{cases}
\]
(See, for instance,~\cite[Lemma 1]{GeffnerN17} for a proof.) Notice that $s(n) = \sum_{k=0}^{n-1} s(n,k)$. Next, we likewise define $\P_{n,k} \subseteq \P_n$ to be the set of small Schr\"oder paths in $\P_n$ with exactly $k$ up steps. Now notice that given $T \in \T_{n,k}$, there must be exactly $k$ nodes in $T$ that is each the leftmost child of its parent. Thus, $\Psi(T)$ would have exactly $k$ up steps, and so $\Psi$ is in fact a bijection between $\T_{n,k}$ and $\P_{n,k}$ for every $n$ and $k$, and it follows that $s(n,k) = |\P_{n,k}|$. Moreover, observe that $s(n,n-1)$ counts the number of Schr\"oder paths from $(0,0)$ to $(2n-2,0)$ with $n-1$ up steps (and hence $n-1$ down steps). These paths must then have no flat steps, and therefore are in fact Dyck paths. Hence, $s(n,n-1)$ gives the $n^{\tn{th}}$ Catalan number~(\seqnum{A000108} in the OEIS).

We are now ready to define the sequences that are of our main focus in this manuscript. Given a real number $d$, we define
\begin{equation}\label{eqSdn}
s_d(n) = \sum_{k=0}^{n-1} s(n,k) d^k
\end{equation}
for every integer $n \geq 1$. Intuitively, one can interpret assigning a weight of $d^k$ to each tree in $\T_{n,k}$, and let $s_d(n)$ be the total weight of all trees in $\T_n$.

Clearly, $s_1(n) = s(n)$, so the above is indeed a generalization of the small Schr\"oder numbers. For $d \geq 2$, we get the following sequences:

\begin{center}
\begin{tabular}{l|rrrrrrrrr}
$n$ & 1 & 2 & 3 & 4 & 5 & 6 & 7 & 8 & $\cdots$ \\
\hline
$s_2(n)$ & 1 & 2 & 10 & 62 & 430 & 3194 & 24850 & 199910 & $\cdots$ \\

$s_3(n)$ & 1 & 3 & 21 & 183 & 1785 & 18651 & 204141 & 2310447 & $\cdots$ \\

$s_4(n)$ & 1 & 4 & 36 &404 & 5076 & 68324 & 963396 & 14046964 & $\cdots$ \\
\end{tabular}
\end{center}

(See  \seqnum{A107841}, \seqnum{A131763}, and  \seqnum{A131765} for the cases $d=2,3,4$ respectively.)
While we could not find any literature in which these sequences were the main focus of study, they (especially $s_2(n)$) have made appearances in many areas, such as queuing theory~\cite{AbateW10}, embedded Riordan arrays~\cite{Barry14}, operads from posets~\cite{Giraudo16}, and vorticity equations in fluid dynamics~\cite{Lam13}. Chan and Pan~\cite{ChenP17} also came across these sequences when enumerating a certain family of valley-type weighted Dyck paths, and related these quantities to a variant of generalized large Schr\"oder numbers that is somewhat similar to $s_d(n)$. We will make this connection more explicit in Section~\ref{sec13}. For other generalizations of small Schr\"oder numbers, see (among others)~\cite{Sulanke04, Schroder07, HuhP15}.

\subsection{A roadmap of this paper}

In Section~\ref{sec12a}, we will discuss some basic properties of $s_d(n)$ by studying its generating function. We will see that the tools for establishing known formulas for $s(n)$ readily extend to proving analogous results for $s_d(n)$. After that, we prove a few identities for $s_d(n)$ (Section 3), and describe several families of Dyck paths that are counted by $s_d(n)$ (Section 4). Our analysis of the weighted Schr\"oder numbers $s_d(n)$ leads to several results regarding Schr\"oder paths and Dyck paths, such as:
\begin{itemize}
\item
For every $n \geq 1$, the number of small Schr\"oder paths in $\P_n$ with an odd number of up steps and that with an even number of up steps differ by exactly one (Proposition~\ref{s_MinusOne});
\item
For every $n \geq 1$, $s_{k\l-1}(n)$ gives the number of Dyck paths from $(0,0)$ to $(2n-2,0)$ with $k$ possible colors for each up step ($U_1, \ldots, U_k$), $\ell$ possible colors for each down step ($D_1, \ldots, D_{\ell})$, and avoid peaks of type $U_1D_1$ (Proposition~\ref{s_dScambler}).
\end{itemize}
We close in Section~\ref{sec14} by mentioning some possible future research directions.

\section{Basic properties and formulas}\label{sec12a}

Let $d$ be a fixed real number, and consider the generating function $y = \sum_{n \geq 1} s_d(n)x^n$. We first establish a functional equation for $y$. 

\begin{proposition}\label{s_dGeneratingFunctionProp}
The generating function $y = \sum_{n \geq 1} s_d(n)x^n$ satisfies the functional equation
\begin{equation}\label{s_dGeneratingFunction}
(d+1)y^2 - (x+1)y + x = 0.
\end{equation}
\end{proposition}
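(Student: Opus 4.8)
The plan is to derive the functional equation combinatorially, by decomposing a Schröder tree (or equivalently a small Schröder path) at its root and translating the decomposition into an identity for $y$. Recall that $y = \sum_{n\geq 1} s_d(n)x^n$ assigns weight $d^k x^n$ to each tree in $\T_{n,k}$; I will build every tree in $\bigcup_n \T_n$ in a unique way and track how the weight changes.

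First I would isolate the trivial case: the single-leaf tree (contributing $x$ to $y$) and, for $n \geq 2$, root trees whose root has exactly $m \geq 2$ subtrees $T_1, \ldots, T_m$, each an arbitrary element of some $\T_{n_i}$. Attaching these subtrees under a new root adds one internal node, hence multiplies the weight by $d$, while the leaf count becomes $n_1 + \cdots + n_m$. So the generating function for trees with $m$ subtrees is $d\,y^m$, and summing over $m \geq 2$ gives
\[
y = x + d\sum_{m\geq 2} y^m = x + \frac{d\,y^2}{1-y}.
\]
Clearing the denominator yields $y(1-y) = x(1-y) + d y^2$, i.e. $y - y^2 = x - xy + dy^2$, which rearranges to $(d+1)y^2 - (x+1)y + x = 0$, exactly \eqref{s_dGeneratingFunction}. (One should note that the factor $1-y$ is invertible as a formal power series since $y$ has zero constant term, and that the geometric sum $\sum_{m\geq 2} y^m$ converges in $\mathbb{R}[[x]]$ for the same reason, so all manipulations are legitimate at the level of formal power series.)

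Alternatively — and I would at least mention this as a cross-check — one can run the same decomposition on small Schröder paths: a nonempty path in $\P_n$ either is the single point (contributing $x$) or, reading the path, decomposes after its first return structure into an initial $U$, a (possibly empty) arbitrary small-Schröder-type path lifted up one level, a $D$, and then the remainder; bookkeeping the up steps gives the weight factor $d$ per ``arch''. Since $\Psi$ is weight-preserving (an up step corresponds to a leftmost-child node, i.e. matches the internal-node count), this produces the same equation. I would present the tree version as the main argument since it is cleanest.

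The only real subtlety — hardly an obstacle — is being careful about which level the weight $d$ is attached to: it is one factor of $d$ per \emph{internal node}, and a new root over $m \geq 2$ subtrees creates exactly one new internal node regardless of $m$, so the factor is $d$, not $d^{m-1}$ or anything $m$-dependent. Once that is pinned down, the rest is the routine algebra of clearing $1-y$ and collecting terms, and the equation falls out immediately.
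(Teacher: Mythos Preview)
Your proof is correct and follows essentially the same approach as the paper: decompose a Schr\"oder tree at the root into $m \geq 2$ subtrees, note that this contributes a single factor of $d$ for the new internal node, sum the resulting geometric series to get $y = x + \dfrac{dy^2}{1-y}$, and rearrange. The paper's argument is identical (with $\ell$ in place of your $m$), though it omits your remarks on formal-power-series validity and the alternative path decomposition.
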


\begin{proof}
Let $\T = \bigcup_{n \geq 1} \T_n$ be the set of all Schr\"oder trees, and for each tree $T \in \T$ we let $k(T), n(T)$ be the number of internal nodes and leaves of $T$, respectively. Then notice that for each $T \in \T$ where $n(T) \geq 2$, the root of $T$ must be an internal node with $\ell \geq 2$ subtrees $T_1, \ldots, T_{\ell}$, in which case $ \sum_{j=1}^{\ell} k(T_j) +1 = k(T)$ and  $ \sum_{j=1}^{\ell} n(T_j) = n(T)$.  Thus,
\begin{align*}
y &= \sum_{n \geq 1} s_d(n)x^n \\
&= \sum_{T \in \T} d^{k(T)} x^{n(T)}\\
&= \sum_{T \in \T_1}d^{k(T)} x^{n(T)} + sum_{T \in \T \setminus \T_1} d^{k(T)}x^{n(T)}\\
&= x + \sum_{\ell \geq 2} \sum_{T_1, \ldots, T_{\ell} \in \T} d^{1 + \sum_{j=1}^{\ell} k(T_j)} x^{\sum_{j=1}^{\ell} n(T_j)} \\
&= x + d \sum_{\ell \geq 2} y^{\ell}\\
&=  x + \frac{dy^2}{1-y},
\end{align*}
which easily rearranges to give~\eqref{s_dGeneratingFunction}.
\end{proof}

When $d \neq -1$, solving for $y$ in~\eqref{s_dGeneratingFunction} (and noting $[x^0]y = 0$) yields
\begin{equation}\label{s_dGeneratingFunction2}
y = \frac{1}{2(d+1)} \left(1+x - \sqrt{1-(4d+2)x+x^2} \right).
\end{equation}
We next use~\eqref{s_dGeneratingFunction} and~\eqref{s_dGeneratingFunction2} to prove a recurrence relation for $s_d(n)$ that would allow for very efficient computation of these numbers. For the case $d=1$, Stanley~\cite{Stanley97} proved the recurrence
\begin{equation}\label{s_1Recur}
ns(n) = 3(2n-3)s(n-1) - (n-3)s(n-2), \quad n \geq 3
\end{equation}
using generating functions. Subsequently, a combinatorial proof using weighted binary trees is given~\cite{FoataZ98}. Shortly after, Sulanke~\cite{Sulanke98} also showed that the above recurrence applies for the closely-related large Schr{\"o}der numbers using Schr{\"o}der paths.

Herein, we adapt Stanley's~\cite{Stanley97} argument to obtain a similar recurrence relation for $s_d(n)$ for all $d$.

\begin{proposition}\label{s_dRecur}
For every real number $d$, $s_d(1) =1, s_d(2) = d$, and
\begin{equation}\label{s_dRecur1}
ns_d(n) = (2d+1) (2n-3)s_d(n-1) - (n-3) s_d(n-2)
\end{equation}
for all $n \geq 3$.
\end{proposition}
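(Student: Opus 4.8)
The plan is to adapt Stanley's generating-function argument~\cite{Stanley97} to general $d$: first turn the functional equation~\eqref{s_dGeneratingFunction} into a linear second-order ordinary differential equation with polynomial coefficients satisfied by $y$, and then read off the recurrence by extracting the coefficient of $x^n$. Write $a_n = s_d(n)$, so that $y = \sum_{n\ge1}a_nx^n$, and set $P(x) = 1-(4d+2)x+x^2$ (the polynomial under the radical in~\eqref{s_dGeneratingFunction2}).

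First I would differentiate~\eqref{s_dGeneratingFunction} with respect to $x$, which gives $2(d+1)yy'-y-(x+1)y'+1=0$, i.e.
\[
\bigl(2(d+1)y-(x+1)\bigr)y' = y-1 .
\]
Next, squaring the bracket and using~\eqref{s_dGeneratingFunction} to substitute $(d+1)y^2=(x+1)y-x$ shows, after a short computation, that $\bigl(2(d+1)y-(x+1)\bigr)^2 = P(x)$ — an identity valid for every real $d$ (including $d=-1$). Combining the last two displays yields the radical-free identity $P(x)(y')^2 = (y-1)^2$. Differentiating this identity of formal power series gives $P'(x)(y')^2 + 2P(x)y'y'' = -2(1-y)y'$; since $y' = 1+2dx+\cdots$ has nonzero constant term and is therefore a unit in $\mathbb{R}[[x]]$, I may cancel the factor $y'$ to obtain
\[
2\bigl(1-(4d+2)x+x^2\bigr)y'' + \bigl(2x-(4d+2)\bigr)y' - 2y + 2 = 0 .
\]

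It then remains to extract the coefficient of $x^n$ from this ODE. Plugging in $y=\sum a_nx^n$, $y'=\sum na_nx^{n-1}$, $y''=\sum n(n-1)a_nx^{n-2}$ and collecting terms, the $a_n$-contributions combine to $2(n-1)(n+1)a_n$, the $a_{n+1}$-contributions to $-(4d+2)(n+1)(2n+1)a_{n+1}$, and the $a_{n+2}$-contribution is $2(n+1)(n+2)a_{n+2}$, while the constant term $+2$ only affects $n=0$. Dividing through by $2(n+1)$ and replacing $n$ by $n-2$ produces exactly $na_n = (2d+1)(2n-3)a_{n-1} - (n-3)a_{n-2}$ for all $n\ge3$. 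The initial values $s_d(1)=1$ and $s_d(2)=d$ are immediate from~\eqref{eqSdn} together with $s(1,0)=1$, $s(2,0)=0$, $s(2,1)=1$ (equivalently, they are the coefficients of $x$ and $x^2$ in~\eqref{s_dGeneratingFunction2}).

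The only step that takes a moment of thought is the algebraic identity $\bigl(2(d+1)y-(x+1)\bigr)^2=P(x)$: this is what eliminates $y$ on the right-hand side and makes the resulting ODE linear, and it is also what lets one avoid treating $d=-1$ separately (alternatively one could note that for each fixed $n$ both sides of~\eqref{s_dRecur1} are polynomials in $d$, so the identity for all $d\neq-1$ forces it for $d=-1$ as well). Everything after that — the second differentiation, the cancellation of $y'$, and the coefficient bookkeeping — is mechanical; a quick check against the known case~\eqref{s_1Recur} and against the tabulated values of $s_2(n)$ confirms the constants.
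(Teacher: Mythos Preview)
Your argument is correct and follows the same overall Stanley-style template as the paper: differentiate the functional equation, convert to a linear ODE with polynomial coefficients, and extract coefficients. The execution differs in one respect. The paper, after obtaining $\bigl(2(d+1)y-(x+1)\bigr)y'=y-1$, substitutes $2(d+1)y-(x+1)=-\sqrt{P(x)}$, multiplies numerator and denominator by this quantity, and uses the functional equation once more to simplify the numerator, arriving at the \emph{first-order} relation $P(x)\,y'-(x-2d-1)y-x+1=0$; the recurrence then falls out from the coefficient of $x^{n-1}$. You instead square to get $P(x)(y')^2=(y-1)^2$, differentiate again, and cancel the unit $y'$, obtaining a \emph{second-order} ODE---which is in fact exactly the $x$-derivative of the paper's first-order relation. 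Your route is a touch more mechanical and handles $d=-1$ uniformly via the algebraic identity $\bigl(2(d+1)y-(x+1)\bigr)^2=P(x)$ (the paper appeals to the explicit formula~\eqref{s_dGeneratingFunction2} for $d\neq-1$ and checks $d=-1$ separately); the paper's route has the minor advantage of yielding the lower-order ODE directly and needing only one differentiation.
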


\begin{proof}
First, $s_d(1) = 1$ and $s_d(2) = d$ follows readily from the definition of $s_d(n)$. For the recurrence, we start with~\eqref{s_dGeneratingFunction} and differentiate both sides with respect to $x$ to obtain
\[
2(d+1)y y' - y - (x+1)y' + 1 =0.
\]
Rearranging gives 
\[
y' = \frac{y-1}{ 2(d+1)y - x - 1}.
\]
We also have 
\[
2(d+1)y - x - 1 = -\sqrt{x^2 - (4d+2)x+1}.
\]
This obviously holds when $d=-1$, and can be derived from~\eqref{s_dGeneratingFunction2} for all other values of $d$. Thus,
\begin{align*}
y' &= \frac{y-1}{ 2(d+1)y - x - 1} =  \frac{y-1}{-\sqrt{x^2-(4d+2)x+1}}= \frac{(y-1)(2(d+1)y-x-1)}{x^2-(4d+2)x+1}\\
&=  \frac{2(d+1)y^2 - (2d+x+3)y+x+1}{x^2-(4d+2)x+1} =  \frac{(x-2d-1)y-x+1}{x^2-(4d+2)x+1},
\end{align*}
where the last equality made use of the fact that $(d+1)y^2 = (x+1)y-x$ from~\eqref{s_dGeneratingFunction}. From the above, we obtain the equation
\begin{equation}\label{s_dRecur2}
(x^2-(4d+2)x+1)y' - (x-2d-1)y - x + 1 = 0.
\end{equation}
Since $y = \sum_{n \geq 0} s_d(n)x^n$ and $y' = \sum_{n \geq 0} (n+1) s_d(n+1) x^n$, taking the coefficient of $x^{n-1}$ of \eqref{s_dRecur2} (for any $n \geq 3$) yields
\[
(n-2) s_d(n-2)  - (4d+2)(n-1)s_d(n-1) + ns_d(n) - s_d(n-2) + (2d+1)s_d(n-1) = 0,
\]
which can be rearranged to give the desired recurrence~\eqref{s_dRecur1}.
\end{proof}

Next, we look into the asymptotic behavior of $s_d(n)$. The asymptotic formula for $s_1(n)$ is well known --- see, for instance,~\cite[p.\ 474]{FlajoletS09}. Here, we extend that formula to one that applies for arbitrary, positive $d$.

\begin{proposition}\label{s_dAsymptotic}
For every real number $d > 0$, as $n \to \infty$,
\[
s_d(n) \sim \left( \frac{ (\sqrt{d+1}-\sqrt{d})\cdot d^{1/4}}{ 2 (d+1)^{3/4} \pi^{1/2}} \right) \cdot n^{-3/2} \cdot \left(2d+1+2\sqrt{d^2+d}\right)^n.
\]
\end{proposition}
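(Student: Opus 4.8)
The plan is to apply the classical singularity-analysis machinery of Flajolet and Sedgewick directly to the explicit closed form~\eqref{s_dGeneratingFunction2}. The first step is to locate the dominant singularity of $y$. The radicand $1-(4d+2)x+x^2$ is a quadratic whose roots are $\rho_{\pm} = (2d+1)\pm 2\sqrt{d^2+d} = (\sqrt{d+1}\pm\sqrt{d})^2$; since $\rho_-\rho_+ = (2d+1)^2-4(d^2+d) = 1$, we have $\rho_+ = 1/\rho_-$, and for $d>0$ one checks $0<\rho_-<1<\rho_+$ (the inequality $\rho_-<1$ is $\sqrt{d+1}-\sqrt{d}<1$). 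Hence $\rho := \rho_- = (\sqrt{d+1}-\sqrt{d})^2$ is the unique singularity of $y$ on its circle of convergence. Because $y$ is an explicit algebraic function whose only branch points are $\rho_-$ and $\rho_+$, it continues analytically to a $\Delta$-domain anchored at $\rho$, which is precisely the hypothesis needed for the transfer theorem.

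Next I would compute the singular expansion of $y$ at $x=\rho$. Writing $1-(4d+2)x+x^2 = (1-x/\rho_-)(1-x/\rho_+)$ (using $\rho_-\rho_+=1$), we obtain
\[
\sqrt{1-(4d+2)x+x^2} = \sqrt{1-x/\rho_+}\cdot\sqrt{1-x/\rho},
\]
where $\sqrt{1-x/\rho_+}$ is analytic at $x=\rho$ and equals $\sqrt{1-\rho/\rho_+} = \sqrt{1-\rho^2}$ there. A short computation gives $1-\rho^2 = (1-\rho)(1+\rho) = \bigl(2\sqrt{d}(\sqrt{d+1}-\sqrt{d})\bigr)\bigl(2\sqrt{d+1}(\sqrt{d+1}-\sqrt{d})\bigr) = 4\sqrt{d(d+1)}\,\rho$, hence $\sqrt{1-\rho/\rho_+} = 2\,(d(d+1))^{1/4}(\sqrt{d+1}-\sqrt{d})$. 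Since the polynomial part $1+x$ of~\eqref{s_dGeneratingFunction2} is entire and contributes only to the analytic part, this yields, as $x\to\rho$,
\[
y = g(x) - \frac{(d(d+1))^{1/4}(\sqrt{d+1}-\sqrt{d})}{d+1}\,\sqrt{1-x/\rho}\,\bigl(1+o(1)\bigr),
\]
with $g$ analytic at $\rho$, and one simplifies $\frac{(d(d+1))^{1/4}}{d+1} = \frac{d^{1/4}}{(d+1)^{3/4}}$.

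Finally I would invoke the standard transfer theorem: the analytic part $g$ contributes nothing to the asymptotics at the order considered, and
\[
[x^n]\sqrt{1-x/\rho} \sim \frac{n^{-3/2}}{\Gamma(-1/2)}\,\rho^{-n} = -\frac{1}{2\sqrt{\pi}}\,n^{-3/2}\,\rho^{-n},
\]
using $\Gamma(-1/2) = -2\sqrt{\pi}$. Multiplying by the constant from the singular expansion and substituting $\rho^{-1} = \rho_+ = 2d+1+2\sqrt{d^2+d}$ gives exactly the claimed formula. The only genuinely non-mechanical point is the justification that $y$ is $\Delta$-analytic at $\rho$ so that the transfer theorem applies; this is routine for algebraic functions — the explicit radical in~\eqref{s_dGeneratingFunction2} is single-valued and analytic on a neighborhood of $\rho$ slit along $[\rho,\infty)$, with the correct branch fixed by $[x^0]y=0$ — and everything else is bookkeeping of elementary algebraic identities involving $\sqrt{d}$ and $\sqrt{d+1}$.
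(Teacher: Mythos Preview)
Your proof is correct and reaches the stated formula, but it follows a different route from the paper. The paper never works with the closed form~\eqref{s_dGeneratingFunction2}; instead it rewrites~\eqref{s_dGeneratingFunction} as $y = x\phi(y)$ with $\phi(y) = (1-y)/(1-(d+1)y)$ and invokes the smooth inverse-function schema (Theorem~VI.6 of Flajolet--Sedgewick). That requires verifying the analytic hypotheses $H_1$, $H_2$ on $\phi$ and solving the characteristic equation $\phi(s) = s\phi'(s)$, giving $s = 1 - \sqrt{d/(d+1)}$; the constant and the exponential rate then drop out of the packaged formula $s_d(n) \sim \sqrt{\phi(s)/(2\phi''(s))}\,\rho^n/\sqrt{\pi n^3}$ with $\rho = \phi(s)/s$. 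Your approach is more direct: with the explicit radical in hand you locate the branch point $\rho_-$, expand locally, and apply the transfer theorem. The paper's method has the advantage of not needing the closed form and of generalizing to functional equations where no such form exists; yours is more self-contained, and makes the $\Delta$-analyticity verification transparent since the only singularities of $y$ are the two visible branch points of the quadratic radicand.
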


\begin{proof}
We follow the template given in~\cite[Theorem VI.6, p.\ 420]{FlajoletS09} to prove our claim. First, recall that $y = \sum_{n \geq 1} s_d(n) x^n$ satisfies the functional equation $y = x \phi(y)$ where $\phi(y) = \frac{1-y}{1-(d+1)y}$. We need to verify the following analytic conditions for $\phi$:
\begin{itemize}
\item[$H_1$:]
$\phi$ is a nonlinear function that is analytic at $0$ with $\phi(0) \neq 0$ and $[z^n] \phi(z) \geq 0$ for all $n \geq 0$.
\item[$H_2$:]
Within the open disc of convergence of $\phi$ at $0$, $|z| < R$, there exists a (then necessarily unique) positive solution $s$ to the characteristic equation $\phi(s) = s\phi'(s)$.
\end{itemize}

Notice that, expanding $\phi(z)$, we obtain
\[
\phi(z) = \frac{1-z}{1-(d+1)z} = 1 + \sum_{n \geq 1} \left((d+1)^n - (d+1)^{n-1} \right)z^n = 1 + d \sum_{n \geq 1} (d+1)^{n-1}z^n,
\]
and so $H_1$ holds for all $d >0$. For $H_2$, the radius of convergence of $\phi(z)$ (which is a geometric series) is obviously $R= \frac{1}{d+1}$. Now since $\phi'(z) = \frac{d}{(1 + (d+1)z)^2}$, solving $\phi(s) = s\phi'(s)$ yields one solution $s = 1 - \sqrt{ \frac{d}{d+1}}$, which lies in $(0, R)$ given $d>0$. Thus, $H_2$ holds as well.

Hence, the aforementioned result in~\cite{FlajoletS09} applies, and
\begin{equation}\label{s_dasymp1}
s_d(n) \sim \sqrt{\frac{\phi(s)}{2\phi''(s)}} \cdot \frac{ \rho^{n}}{\sqrt{\pi n^3}},
\end{equation}
where $\rho = \frac{\phi(s)}{s}$. It is not hard to check that
\begin{align*}
\phi(s) &= \frac{1}{\sqrt{d+1}(\sqrt{d+1} - \sqrt{d})},\\
 \phi''(s)& = \frac{4(d+1)}{\sqrt{d} \left( \sqrt{d+1} - \sqrt{d} \right)^3},\\
  \rho &= 2d+1 + 2\sqrt{d^2+d}.
  \end{align*}
Substituting these expressions into~\eqref{s_dasymp1} and simplifying gives the desired result.
\end{proof}

In particular, from Proposition~\ref{s_dAsymptotic} we obtain that the sequence $s_d(n)$ has growth rate
\[
\lim_{n \to \infty} \frac{s_d(n+1)}{s_d(n)} = \rho = 2d+1 + 2\sqrt{d^2+d} \in (4d+1, 4d+2)
\]
for all $d > 0$. 

\section{Some identities and implications}\label{sec12b}

In this section, we will prove several identities related to $s_d(n)$, and describe their combinatorial implications. We will first focus on two special sequences $s_{-1/2}(n)$ and $s_{-1}(n)$, then prove an identity that seems to have a natural connection with large Schr\"oder numbers.

\subsection{The case $d= -1/2$}

Notice that in the recurrence~\eqref{s_dRecur1}, the coefficient of $s_d(n-1)$ vanishes when $d = \frac{-1}{2}$. In this case, we obtain the sequence:

\begin{center}
\begin{tabular}{l|rrrrrrrrrrrrr}
$n$ & 1 & 2 & 3 & 4 & 5 & 6 & 7 & 8 & 9 & 10 & 11 & 12 & $\cdots$ \\
\hline
$s_{-1/2}(n)$ & 1 & $\frac{-1}{2}$ & 0 & $\frac{1}{4}$ & 0 & $\frac{-2}{8}$ & 0 & $\frac{5}{16}$ & 0 & $\frac{-14}{32}$ & 0 & $\frac{42}{64}$ & $\cdots$ \\
\end{tabular}
\end{center}

For every $n \geq 1$, let $c(n) = \frac{1}{n} \binom{2n-2}{n-1}$ denote the $n^{\tn{th}}$ Catalan number. It is easy to check that $c(1) = 1$, and that
\begin{equation}\label{c_nRecur}
c(n) = \frac{2 (2n-3)}{n} c(n-1)
\end{equation}
for all $n \geq 2$. Using this recurrence of $c(n)$ and Proposition~\ref{s_dRecur}, we prove the following:

\begin{proposition}\label{s_MinusOneHalf}
For every integer $m \geq 1$, $s_{-1/2}(2m+1) = 0$ and $s_{-1/2}(2m) = \frac{(-1)^m}{2^{2m-1}} c(m)$.
\end{proposition}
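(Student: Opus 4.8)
The plan is a short double induction driven by the recurrence of Proposition~\ref{s_dRecur}. The key observation is that at $d = -1/2$ the coefficient $2d+1$ vanishes, so~\eqref{s_dRecur1} collapses to the two-term recurrence
\[
n s_{-1/2}(n) = -(n-3) s_{-1/2}(n-2), \qquad n \geq 3.
\]
Since this relates $s_{-1/2}(n)$ only to $s_{-1/2}(n-2)$, the odd-indexed and even-indexed subsequences decouple completely, which is exactly why the two assertions can be proved independently. From the definition one also has the base values $s_{-1/2}(1) = 1$ and $s_{-1/2}(2) = d = -1/2$.

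For the odd case, I would take $n = 3$ in the recurrence: this gives $3\, s_{-1/2}(3) = -(0)\, s_{-1/2}(1) = 0$, hence $s_{-1/2}(3) = 0$. For $m \geq 2$ the recurrence with $n = 2m+1$ reads $s_{-1/2}(2m+1) = -\frac{2m-2}{2m+1}\, s_{-1/2}(2m-1)$, so a trivial induction propagates the vanishing to all $m \geq 1$.

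For the even case, the base case $m = 1$ is $s_{-1/2}(2) = -1/2 = \frac{(-1)^1}{2^{1}} c(1)$, using $c(1) = 1$. For the inductive step, assume $s_{-1/2}(2m) = \frac{(-1)^m}{2^{2m-1}} c(m)$; the recurrence with $n = 2m+2$ gives
\[
s_{-1/2}(2m+2) = -\frac{2m-1}{2m+2}\, s_{-1/2}(2m) = \frac{(-1)^{m+1}(2m-1)}{2^{2m-1}(2m+2)}\, c(m),
\]
and it remains to verify this equals $\frac{(-1)^{m+1}}{2^{2m+1}} c(m+1)$. After cancelling the sign and a factor $2^{2m-1}$, this reduces to $c(m+1) = \frac{4(2m-1)}{2m+2}\, c(m) = \frac{2(2m-1)}{m+1}\, c(m)$, which is precisely the Catalan recurrence~\eqref{c_nRecur} evaluated at $n = m+1$.

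I do not expect any genuine obstacle: once the recurrence is specialized to $d = -1/2$, everything is a routine induction, and the only nontrivial algebraic input is the standard Catalan recurrence~\eqref{c_nRecur}, already recorded in the excerpt. The only thing to watch is the bookkeeping of the power of $2$ and the alternating sign, together with correctly anchoring all base cases (the $m=1$ cases for both claims, plus the extra starting value $s_{-1/2}(3) = 0$ for the odd induction).
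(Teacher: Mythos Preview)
Your proposal is correct and follows essentially the same approach as the paper: specialize the recurrence~\eqref{s_dRecur1} at $d=-1/2$ to decouple odd and even indices, anchor the odd case at $s_{-1/2}(3)=0$ and the even case at $s_{-1/2}(2)=-1/2$, and close the even induction via the Catalan recurrence~\eqref{c_nRecur}. The only cosmetic difference is that you step from $2m$ to $2m+2$ while the paper steps from $2m-2$ to $2m$.
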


\begin{proof}
We prove our claim by induction on $m$. First, in the case of $d= \frac{-1}{2}$,~\eqref{s_dRecur1} reduces to
\begin{equation}\label{s_MinusOneHalf1}
s_{-1/2}(n) = \frac{-(n-3)}{n} s_{-1/2}(n-2).
\end{equation}
Thus, $s_{-1/2}(3) = 0 s_{-1/2}(1) = 0$. From there on, since $s_{-1/2}(2m+1)$ is a multiple of $s_{-1/2}(2m-1)$ for all $m \geq 1$, we obtain that $s_{-1/2}(2m+1) = 0$ for all $m \geq 1$.

Next, we establish the claim for $s_{-1/2}(2m)$. When $m=1$, 
\[
s_{-1/2}(2) = \frac{-1}{2} = \frac{(-1)^1}{2^{2-1}} c(1),
\]
so the base case holds. For the inductive step, notice that
\begin{align*}
s_{-1/2}(2m) 
&= \frac{-(2m-3)}{2m} s_{-1/2}(2m-2) \\
&= \frac{-(2m-3)}{2m} \left( \frac{(-1)^{n-1}}{2^{2m-3}} c(m-1) \right)\\
&= \frac{(-1)^{n}}{2^{2m-1}} \left( \frac{2(2m-3)}{m}  c(m-1)  \right)\\
&= \frac{(-1)^{n}}{2^{2m-1}} c(m),
\end{align*}
where the last equality follows from~\eqref{c_nRecur}. This finishes our proof.
\end{proof}

We will return to the quantity $s_{-1/2}(n)$ subsequently in Section~\ref{sec13}.

\subsection{The case $d=-1$}

Next, we turn to the case when $d=-1$. In this case, \eqref{s_dGeneratingFunction} gives
\[
y=  \frac{x}{1-x} = \sum_{n \geq 1} (-1)^{n-1}x^i,
\]
and so $s_{-1}(n) = (-1)^{n-1}$ for every $n \geq 1$. Since $s_{-1}(n) = \sum_{k \geq 0} s(n,k) (-1)^k$ by definition, we have shown the following:

\begin{proposition}\label{s_MinusOne}
For every integer $n \geq 1$, 
\[
\sum_{k~\tn{odd}} s(n,k) = \sum_{k~\tn{even}} s(n,k)+(-1)^{n}.
\]
\end{proposition}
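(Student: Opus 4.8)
The plan is to read $s_{-1}(n)$ directly off the functional equation and then unpack the definition of $s_d(n)$. Proposition~\ref{s_dGeneratingFunctionProp} holds for every real $d$, so I may substitute $d=-1$ into~\eqref{s_dGeneratingFunction}. The quadratic term vanishes, leaving the linear equation $-(x+1)y + x = 0$, whose unique formal power series solution (with $[x^0]y = 0$) is
\[
y = \frac{x}{1+x} = \sum_{n \geq 1} (-1)^{n-1} x^n .
\]
Comparing coefficients of $x^n$ gives $s_{-1}(n) = (-1)^{n-1}$ for every $n \geq 1$.

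Next I would convert this into the claimed identity. By definition~\eqref{eqSdn},
\[
s_{-1}(n) = \sum_{k=0}^{n-1} s(n,k)(-1)^k = \sum_{k~\tn{even}} s(n,k) - \sum_{k~\tn{odd}} s(n,k),
\]
and equating this with $(-1)^{n-1}$ and rearranging yields
\[
\sum_{k~\tn{odd}} s(n,k) = \sum_{k~\tn{even}} s(n,k) + (-1)^n ,
\]
which is exactly the statement.

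There is essentially no hard step here; the only point requiring a word of care is that $d=-1$ is the degenerate value at which the defining quadratic~\eqref{s_dGeneratingFunction} collapses to a linear equation, so that~\eqref{s_dGeneratingFunction2} does not apply — but the linear equation is even easier to solve, so this is a feature rather than an obstacle. As an alternative, purely combinatorial route, I would instead exhibit a sign-reversing involution on the small Schr\"oder paths $\P_n$ (equivalently, via $\Psi$, on the trees $\T_n$): scanning a path from left to right, locate the first step that is either a flat step $F$ (necessarily at positive height) or the up step of a peak $UD$ that begins at positive height, and swap one for the other (replace that $F$ by $UD$, or that $UD$ by $F$). This is a well-defined involution that changes the number of up steps by exactly one, and its unique fixed point is the path $(UD)^{n-1}$, which has $n-1$ up steps. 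Summing the weight $(-1)^{\#U}$ over $\P_n$ then again gives $s_{-1}(n) = (-1)^{n-1}$, and hence the proposition.
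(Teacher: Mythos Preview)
Your algebraic argument is exactly the paper's: substitute $d=-1$ into the functional equation, solve $y=x/(1+x)$, read off $s_{-1}(n)=(-1)^{n-1}$, and unpack the definition. (The paper's displayed $x/(1-x)$ is a typo; your $x/(1+x)$ is the correct expression.)

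Your optional combinatorial argument is correct but genuinely different from the paper's. The paper builds a sign-reversing involution on $\P_n$ by locating the \emph{first valley $DU$ or flat step $F$} and swapping one for the other; its unique fixed point is $U^{n-1}D^{n-1}$. You instead swap the \emph{first flat step $F$ with the first peak $UD$ whose up step starts at positive height}; your unique fixed point is $(UD)^{n-1}$. Both involutions change $U(P)$ by exactly one and both fixed points have $n-1$ up steps, so both yield $s_{-1}(n)=(-1)^{n-1}$. The paper's swap $DU\leftrightarrow F$ has the mild advantage that no height condition is needed (valleys in a small Schr\"oder path are automatically interchangeable with $F$ without risking an $F$ on the $x$-axis), whereas your swap $UD\leftrightarrow F$ needs the ``positive height'' restriction to keep the resulting path in $\P_n$; conversely, your fixed point is arguably easier to identify, since a Dyck path with all peaks at height~$1$ is immediately seen to be $(UD)^{n-1}$.
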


Proposition~\ref{s_MinusOne} implies that, for every $n$, the number of trees in $\T_n$ with an odd number of internal nodes and that with an even number of internal nodes differ by exactly $1$. Likewise, in terms of small Schr\"oder paths, we now know that the number of paths in $\P_n$ with an odd number of up steps is exactly $1$ away from that with an even number of up steps. This leads to the following consequence, which might be folklore but we could not find a mention of in the literature:

\begin{corollary}\label{s_nOdd}
The small Schr\"oder numbers $s(n)$ is odd for all $n \geq 1$.
\end{corollary}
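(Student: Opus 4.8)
The plan is to derive Corollary~\ref{s_nOdd} directly from Proposition~\ref{s_MinusOne} by a parity argument. Proposition~\ref{s_MinusOne} tells us that $\sum_{k~\tn{odd}} s(n,k) - \sum_{k~\tn{even}} s(n,k) = (-1)^{n}$, which in particular is an odd number. On the other hand, $s(n) = \sum_{k} s(n,k) = \sum_{k~\tn{even}} s(n,k) + \sum_{k~\tn{odd}} s(n,k)$. The key observation is that for any two integers $a$ and $b$, the sum $a+b$ and the difference $a-b$ have the same parity, since $(a+b) - (a-b) = 2b$ is even. Applying this with $a = \sum_{k~\tn{odd}} s(n,k)$ and $b = \sum_{k~\tn{even}} s(n,k)$, we conclude that $s(n) = a+b$ has the same parity as $a-b = (-1)^n$, which is odd. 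Hence $s(n)$ is odd for every $n \geq 1$.

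I do not anticipate any real obstacle here; the statement is an immediate corollary once Proposition~\ref{s_MinusOne} is in hand, and the only ingredient is the elementary fact that a sum and the corresponding difference of two integers share the same parity. If one prefers an even more self-contained route, one can instead argue directly from $s_{-1}(n) = (-1)^{n-1}$: reducing the defining identity $s_{-1}(n) = \sum_{k} s(n,k)(-1)^k$ modulo $2$ gives $s(n) \equiv \sum_k s(n,k) \equiv s_{-1}(n) \equiv 1 \pmod 2$, since $(-1)^k \equiv 1 \pmod 2$ for every $k$. Either phrasing yields a one-line proof, so the main decision is simply which of the two to present; I would lead with the parity-of-sum-and-difference version since it references the stated proposition most transparently.
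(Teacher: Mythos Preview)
Your argument is correct and matches the paper's approach exactly: the corollary is presented as an immediate consequence of Proposition~\ref{s_MinusOne}, with the implicit reasoning being precisely the parity observation you spell out (that $s(n)$ is the sum of two integers whose difference is $\pm 1$). Your alternative mod~$2$ phrasing via $s_{-1}(n)$ is also fine and equivalent.
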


Corollary~\ref{s_nOdd} further implies that, for all odd, positive integers $d$ and for all $n \geq 1$, $s_d(n)$ is odd (since then $d^k$ is odd for all $k \geq 0$, and so $s_d(n)$ would be a sum of an odd number of odd quantities). Likewise, we obtain that for all $n \geq 2$, the $n^{\tn{th}}$ large Schr\"oder number (which is twice the $n^{\tn{th}}$ small Schr\"oder number) is congruent to $2$ (mod $4$).

While we have already described a very short algebraic proof to Proposition~\ref{s_MinusOne}, we will also provide a simple combinatorial proof.

\begin{proof}[Proof of Proposition~\ref{s_MinusOne}]
Given a fixed $n$, consider the set of paths $\P_n$. Let $Q = U^{n-1} D^{n-1}$ (i.e., the path consisting of $n-1$ up steps followed by $n-1$ down steps), and define
\[
\P^O_n = \left( \bigcup_{k~\tn{odd}} \P_{n,k} \right) \setminus \set{Q}, \quad 
\P^E_n = \left( \bigcup_{k~\tn{even}} \P_{n,k} \right) \setminus \set{Q}.
\]
To prove our claim, it then suffices to find a bijection between $\P^O_n$ and $\P^E_n$ for every $n$. Given a path $P \in \P_n$ where $P \neq Q$, $P$ must then contain either a flat step or a valley (or both). We
define $\alpha(P)$ as follows:
\begin{itemize}
\item[(1)]
If the first valley in $P$ occurs before the first flat step, we write $P= P_1 DU P_2$ where $P_1$ does not contain a flat step, and define $\alpha(P) = P_1 F P_2$.
\item[(2)]
Otherwise, the first flat step occurs before the first valley. In this case, we write $P$ as $P_1 F P_2$ where $P_1$ does not contain an instance of $DU$, and define $\alpha(P) = P_1DU P_2$.
\end{itemize}
Figure~\ref{figS_MinusOne} illustrates the mapping $\alpha$ on paths in $\P_4$. Intuitively, given a path, $\alpha$ finds the first appearance of a valley $DU$ or a flat step $F$, whichever comes first. If it is a valley, then $\alpha$ replaces it by a flat step, and vice versa. 

\def\figS_MinusOneScale{0.3}
\begin{center}
\begin{figure}[h]

\[
\begin{array}{cccccc}
\begin{tikzpicture}[scale = \figS_MinusOneScale, xscale=1,yscale=1, font=\scriptsize\sffamily, thick,main node/.style={circle,inner sep=0.5mm,draw, fill}]
\def\xlb{0}; \def\xub{6}; \def\ylb{0}; \def\yub{3}; \def\buf{0};
\foreach \x in {\xlb ,...,\xub}
{    \ifthenelse{\NOT 0 = \x}{\draw[thick](\x ,-2pt) -- (\x ,2pt);}{}
\draw[dotted, thick](\x,\ylb- \buf) -- (\x,\yub + \buf);}
\foreach \y in {\ylb ,...,\yub}
{    \ifthenelse{\NOT 0 = \y}{\draw[thick](-2pt, \y) -- (2pt, \y);}{}
\draw[dotted, thick](\xlb- \buf, \y) -- (\xub + \buf, \y);}
\node[main node] at (0,0) (0) {};
\node[main node] at (1,1) (1) {};
\node[main node] at (2,2) (2) {};
\node[main node] at (3,1) (3) {};
\node[main node] at (4,2) (4) {};
\node[main node] at (5,1) (5) {};
\node[main node] at (6,0) (6) {};
\draw[ultra thick] (0) -- (1) -- (2) -- (3)-- (4)-- (5)-- (6);

\end{tikzpicture}

&
\begin{tikzpicture}[scale = \figS_MinusOneScale, xscale=1,yscale=1, font=\scriptsize\sffamily, thick,main node/.style={circle,inner sep=0.5mm,draw, fill}]
\def\xlb{0}; \def\xub{6}; \def\ylb{0}; \def\yub{3}; \def\buf{0};
\foreach \x in {\xlb ,...,\xub}
{    \ifthenelse{\NOT 0 = \x}{\draw[thick](\x ,-2pt) -- (\x ,2pt);}{}
\draw[dotted, thick](\x,\ylb- \buf) -- (\x,\yub + \buf);}
\foreach \y in {\ylb ,...,\yub}
{    \ifthenelse{\NOT 0 = \y}{\draw[thick](-2pt, \y) -- (2pt, \y);}{}
\draw[dotted, thick](\xlb- \buf, \y) -- (\xub + \buf, \y);}
\node[main node] at (0,0) (0) {};
\node[main node] at (1,1) (1) {};
\node[main node] at (2,2) (2) {};
\node[main node] at (3,1) (3) {};
\node[main node] at (4,0) (4) {};
\node[main node] at (5,1) (5) {};
\node[main node] at (6,0) (6) {};
\draw[ultra thick] (0) -- (1) -- (2) -- (3)-- (4)-- (5)-- (6);

\end{tikzpicture}
&
\begin{tikzpicture}[scale = \figS_MinusOneScale, xscale=1,yscale=1, font=\scriptsize\sffamily, thick,main node/.style={circle,inner sep=0.5mm,draw, fill}]
\def\xlb{0}; \def\xub{6}; \def\ylb{0}; \def\yub{3}; \def\buf{0};
\foreach \x in {\xlb ,...,\xub}
{    \ifthenelse{\NOT 0 = \x}{\draw[thick](\x ,-2pt) -- (\x ,2pt);}{}
\draw[dotted, thick](\x,\ylb- \buf) -- (\x,\yub + \buf);}
\foreach \y in {\ylb ,...,\yub}
{    \ifthenelse{\NOT 0 = \y}{\draw[thick](-2pt, \y) -- (2pt, \y);}{}
\draw[dotted, thick](\xlb- \buf, \y) -- (\xub + \buf, \y);}
\node[main node] at (0,0) (0) {};
\node[main node] at (1,1) (1) {};
\node[main node] at (3,1) (2) {};
\node[main node] at (5,1) (3) {};
\node[main node] at (6,0) (4) {};
\draw[ultra thick] (0) -- (1) -- (2) -- (3)-- (4);

\end{tikzpicture}
&
\begin{tikzpicture}[scale = \figS_MinusOneScale, xscale=1,yscale=1, font=\scriptsize\sffamily, thick,main node/.style={circle,inner sep=0.5mm,draw, fill}]
\def\xlb{0}; \def\xub{6}; \def\ylb{0}; \def\yub{3}; \def\buf{0};
\foreach \x in {\xlb ,...,\xub}
{    \ifthenelse{\NOT 0 = \x}{\draw[thick](\x ,-2pt) -- (\x ,2pt);}{}
\draw[dotted, thick](\x,\ylb- \buf) -- (\x,\yub + \buf);}
\foreach \y in {\ylb ,...,\yub}
{    \ifthenelse{\NOT 0 = \y}{\draw[thick](-2pt, \y) -- (2pt, \y);}{}
\draw[dotted, thick](\xlb- \buf, \y) -- (\xub + \buf, \y);}
\node[main node] at (0,0) (0) {};
\node[main node] at (1,1) (1) {};
\node[main node] at (2,0) (2) {};
\node[main node] at (3,1) (3) {};
\node[main node] at (4,2) (4) {};
\node[main node] at (5,1) (5) {};
\node[main node] at (6,0) (6) {};
\draw[ultra thick] (0) -- (1) -- (2) -- (3)-- (4)-- (5)-- (6);

\end{tikzpicture}
&
\begin{tikzpicture}[scale = \figS_MinusOneScale, xscale=1,yscale=1, font=\scriptsize\sffamily, thick,main node/.style={circle,inner sep=0.5mm,draw, fill}]
\def\xlb{0}; \def\xub{6}; \def\ylb{0}; \def\yub{3}; \def\buf{0};
\foreach \x in {\xlb ,...,\xub}
{    \ifthenelse{\NOT 0 = \x}{\draw[thick](\x ,-2pt) -- (\x ,2pt);}{}
\draw[dotted, thick](\x,\ylb- \buf) -- (\x,\yub + \buf);}
\foreach \y in {\ylb ,...,\yub}
{    \ifthenelse{\NOT 0 = \y}{\draw[thick](-2pt, \y) -- (2pt, \y);}{}
\draw[dotted, thick](\xlb- \buf, \y) -- (\xub + \buf, \y);}
\node[main node] at (0,0) (0) {};
\node[main node] at (1,1) (1) {};
\node[main node] at (2,0) (2) {};
\node[main node] at (3,1) (3) {};
\node[main node] at (4,0) (4) {};
\node[main node] at (5,1) (5) {};
\node[main node] at (6,0) (6) {};
\draw[ultra thick] (0) -- (1) -- (2) -- (3)-- (4)-- (5)-- (6);

\end{tikzpicture}
&
\begin{tikzpicture}[scale = \figS_MinusOneScale, xscale=1,yscale=1, font=\scriptsize\sffamily, thick,main node/.style={circle,inner sep=0.5mm,draw, fill}]
\def\xlb{0}; \def\xub{6}; \def\ylb{0}; \def\yub{3}; \def\buf{0};
\foreach \x in {\xlb ,...,\xub}
{    \ifthenelse{\NOT 0 = \x}{\draw[thick](\x ,-2pt) -- (\x ,2pt);}{}
\draw[dotted, thick](\x,\ylb- \buf) -- (\x,\yub + \buf);}
\foreach \y in {\ylb ,...,\yub}
{    \ifthenelse{\NOT 0 = \y}{\draw[thick](-2pt, \y) -- (2pt, \y);}{}
\draw[dotted, thick](\xlb- \buf, \y) -- (\xub + \buf, \y);}

\node[main node] at (0,0) (0) {};
\node[main node] at (1,1) (1) {};
\node[main node] at (2,2) (2) {};
\node[main node] at (3,3) (3) {};
\node[main node] at (4,2) (4) {};
\node[main node] at (5,1) (5) {};
\node[main node] at (6,0) (6) {};
\draw[ultra thick] (0) -- (1) -- (2) -- (3)-- (4)-- (5)-- (6);

\end{tikzpicture}

\\
\updownarrow & \updownarrow & \updownarrow & \updownarrow & \updownarrow & \\

\begin{tikzpicture}[scale = \figS_MinusOneScale, xscale=1,yscale=1, font=\scriptsize\sffamily, thick,main node/.style={circle,inner sep=0.5mm,draw, fill}]
\def\xlb{0}; \def\xub{6}; \def\ylb{0}; \def\yub{3}; \def\buf{0};
\foreach \x in {\xlb ,...,\xub}
{    \ifthenelse{\NOT 0 = \x}{\draw[thick](\x ,-2pt) -- (\x ,2pt);}{}
\draw[dotted, thick](\x,\ylb- \buf) -- (\x,\yub + \buf);}
\foreach \y in {\ylb ,...,\yub}
{    \ifthenelse{\NOT 0 = \y}{\draw[thick](-2pt, \y) -- (2pt, \y);}{}
\draw[dotted, thick](\xlb- \buf, \y) -- (\xub + \buf, \y);}
\node[main node] at (0,0) (0) {};
\node[main node] at (1,1) (1) {};
\node[main node] at (2,2) (2) {};
\node[main node] at (4,2) (3) {};
\node[main node] at (5,1) (4) {};
\node[main node] at (6,0) (5) {};
\draw[ultra thick] (0) -- (1) -- (2) -- (3)-- (4)-- (5);

\end{tikzpicture}
&

\begin{tikzpicture}[scale = \figS_MinusOneScale, xscale=1,yscale=1, font=\scriptsize\sffamily, thick,main node/.style={circle,inner sep=0.5mm,draw, fill}]
\def\xlb{0}; \def\xub{6}; \def\ylb{0}; \def\yub{3}; \def\buf{0};
\foreach \x in {\xlb ,...,\xub}
{    \ifthenelse{\NOT 0 = \x}{\draw[thick](\x ,-2pt) -- (\x ,2pt);}{}
\draw[dotted, thick](\x,\ylb- \buf) -- (\x,\yub + \buf);}
\foreach \y in {\ylb ,...,\yub}
{    \ifthenelse{\NOT 0 = \y}{\draw[thick](-2pt, \y) -- (2pt, \y);}{}
\draw[dotted, thick](\xlb- \buf, \y) -- (\xub + \buf, \y);}
\node[main node] at (0,0) (0) {};
\node[main node] at (1,1) (1) {};
\node[main node] at (2,2) (2) {};
\node[main node] at (3,1) (3) {};
\node[main node] at (5,1) (4) {};
\node[main node] at (6,0) (5) {};
\draw[ultra thick] (0) -- (1) -- (2) -- (3)-- (4)-- (5);

\end{tikzpicture}
&
\begin{tikzpicture}[scale = \figS_MinusOneScale, xscale=1,yscale=1, font=\scriptsize\sffamily, thick,main node/.style={circle,inner sep=0.5mm,draw, fill}]
\def\xlb{0}; \def\xub{6}; \def\ylb{0}; \def\yub{3}; \def\buf{0};
\foreach \x in {\xlb ,...,\xub}
{    \ifthenelse{\NOT 0 = \x}{\draw[thick](\x ,-2pt) -- (\x ,2pt);}{}
\draw[dotted, thick](\x,\ylb- \buf) -- (\x,\yub + \buf);}
\foreach \y in {\ylb ,...,\yub}
{    \ifthenelse{\NOT 0 = \y}{\draw[thick](-2pt, \y) -- (2pt, \y);}{}
\draw[dotted, thick](\xlb- \buf, \y) -- (\xub + \buf, \y);}
\node[main node] at (0,0) (0) {};
\node[main node] at (1,1) (1) {};
\node[main node] at (2,0) (2) {};
\node[main node] at (3,1) (3) {};
\node[main node] at (5,1) (4) {};
\node[main node] at (6,0) (5) {};
\draw[ultra thick] (0) -- (1) -- (2) -- (3)-- (4)-- (5);

\end{tikzpicture}
&
\begin{tikzpicture}[scale = \figS_MinusOneScale, xscale=1,yscale=1, font=\scriptsize\sffamily, thick,main node/.style={circle,inner sep=0.5mm,draw, fill}]
\def\xlb{0}; \def\xub{6}; \def\ylb{0}; \def\yub{3}; \def\buf{0};
\foreach \x in {\xlb ,...,\xub}
{    \ifthenelse{\NOT 0 = \x}{\draw[thick](\x ,-2pt) -- (\x ,2pt);}{}
\draw[dotted, thick](\x,\ylb- \buf) -- (\x,\yub + \buf);}
\foreach \y in {\ylb ,...,\yub}
{    \ifthenelse{\NOT 0 = \y}{\draw[thick](-2pt, \y) -- (2pt, \y);}{}
\draw[dotted, thick](\xlb- \buf, \y) -- (\xub + \buf, \y);}
\node[main node] at (0,0) (0) {};
\node[main node] at (1,1) (1) {};
\node[main node] at (3,1) (2) {};
\node[main node] at (4,2) (3) {};
\node[main node] at (5,1) (4) {};
\node[main node] at (6,0) (5) {};
\draw[ultra thick] (0) -- (1) -- (2) -- (3)-- (4)-- (5);

\end{tikzpicture}
&
\begin{tikzpicture}[scale = \figS_MinusOneScale, xscale=1,yscale=1, font=\scriptsize\sffamily, thick,main node/.style={circle,inner sep=0.5mm,draw, fill}]
\def\xlb{0}; \def\xub{6}; \def\ylb{0}; \def\yub{3}; \def\buf{0};
\foreach \x in {\xlb ,...,\xub}
{    \ifthenelse{\NOT 0 = \x}{\draw[thick](\x ,-2pt) -- (\x ,2pt);}{}
\draw[dotted, thick](\x,\ylb- \buf) -- (\x,\yub + \buf);}
\foreach \y in {\ylb ,...,\yub}
{    \ifthenelse{\NOT 0 = \y}{\draw[thick](-2pt, \y) -- (2pt, \y);}{}
\draw[dotted, thick](\xlb- \buf, \y) -- (\xub + \buf, \y);}
\node[main node] at (0,0) (0) {};
\node[main node] at (1,1) (1) {};
\node[main node] at (3,1) (2) {};
\node[main node] at (4,0) (3) {};
\node[main node] at (5,1) (4) {};
\node[main node] at (6,0) (5) {};
\draw[ultra thick] (0) -- (1) -- (2) -- (3)-- (4)-- (5);

\end{tikzpicture}
&

\end{array}
\]

\caption{Illustrating the mapping $\alpha$ on paths in $\P_4$ (Proof of Proposition~\ref{s_MinusOne}). Paths on the first (resp., second) row have an odd (resp., even) number of up steps.}\label{figS_MinusOne}
\end{figure}
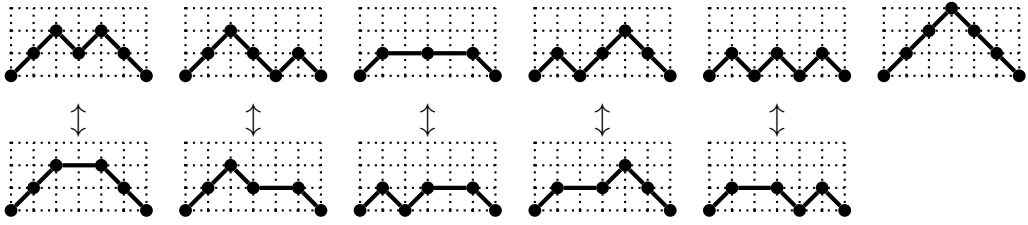
\end{center}

Now observe that in any case, the number of up steps in $P$ and in $\alpha(P)$ differ by exactly one, so $\alpha$ can be considered as a mapping from $\P^O_n$ to $\P^E_n$. Moreover, it is easy to see that $\alpha(\alpha(P)) = P$ for all paths $P \in \P_n \setminus \set{Q}$, and so $\alpha$ is in fact a bijection between $P^O_n$ and $P^E_n$. This finishes our proof.
\end{proof}

\subsection{Connection to large Schr\"oder numbers}\label{sec12b3}

Here, we prove another identity for $s_{d}(n)$, and briefly discuss a version of weighted large Schr\"oder numbers that is analogous to $s_d(n)$. We first have the following:

\begin{proposition}\label{SdIdentity2}
For every real number $d \neq -1$ and every integer $n \geq 2$,
\[
s_d(n) = \frac{(-1)^{n-1} d}{d+1} s_{-d-1}(n).
\]
\end{proposition}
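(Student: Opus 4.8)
The plan is to work with the explicit generating function \eqref{s_dGeneratingFunction2}. Write $y_d(x) = \sum_{n\ge1} s_d(n)x^n$ for the weighted small Schr\"oder generating function at parameter $d$; since $d\neq-1$ and $-d-1\neq-1$ (because $d\neq 0$, and if $d=0$ the identity is trivially $s_0(n)=0=s_{-1}(n)$ for $n\ge2$, which we can dispatch separately), both $y_d$ and $y_{-d-1}$ are given by \eqref{s_dGeneratingFunction2}. Note that the discriminant $1-(4d+2)x+x^2$ is \emph{unchanged} when $d$ is replaced by $-d-1$, since $4(-d-1)+2 = -4d-2 = -(4d+2)$ only flips the sign of the middle coefficient — wait, that is not invariance. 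Let me instead substitute $x\mapsto -x$: then $1-(4d+2)(-x)+x^2 = 1+(4d+2)x+x^2 = 1-(4(-d-1)+2)(-x)\cdot(-1)\ldots$; the clean statement is that replacing $d$ by $-d-1$ \emph{and} $x$ by $-x$ fixes the radicand $1-(4d+2)x+x^2 \mapsto 1+(4d+2)(-x)+x^2$. So the substitution to exploit is $x\mapsto -x$ together with $d\mapsto -d-1$.

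Concretely, I would compute $y_{-d-1}(-x)$ from \eqref{s_dGeneratingFunction2}: replacing $d$ by $-d-1$ changes the prefactor $\frac{1}{2(d+1)}$ to $\frac{1}{2(-d)} = \frac{-1}{2d}$, changes $1+x$ to $1-x$, and changes the radicand $1-(4d+2)x+x^2$ to $1+(4d+2)x+x^2$; then further replacing $x$ by $-x$ restores $1-(4d+2)x+x^2$ under the square root and turns $1-x$ into $1+x$. This yields
\[
y_{-d-1}(-x) = \frac{-1}{2d}\left(1+x-\sqrt{1-(4d+2)x+x^2}\right) = \frac{-(d+1)}{d}\,y_d(x).
\]
Rearranging gives $y_d(x) = \frac{-d}{d+1}\,y_{-d-1}(-x)$. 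Taking the coefficient of $x^n$ on both sides, and using $[x^n]\,y_{-d-1}(-x) = (-1)^n s_{-d-1}(n)$, we get $s_d(n) = \frac{-d}{d+1}(-1)^n s_{-d-1}(n) = \frac{(-1)^{n-1}d}{d+1}s_{-d-1}(n)$, which is exactly the claimed identity. The restriction $n\ge2$ is needed only because at $n=1$ the constant/prefactor bookkeeping gives $s_d(1)=1$ on the left but $\frac{-d}{d+1}\cdot(-1)\cdot 1 = \frac{d}{d+1}$ on the right — the discrepancy is absorbed in the "$+1+x$" versus "$-1+x$" constant terms inside the parentheses, which contribute to $[x^0]$ and $[x^1]$; I should check the $n=1$ failure explicitly to confirm the hypothesis $n\ge2$ is sharp, but that is a one-line computation.

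The only real subtlety is the branch of the square root and the degenerate parameter values. For the branch: \eqref{s_dGeneratingFunction2} is selected by the condition $[x^0]y=0$, i.e. $\sqrt{1-(4d+2)x+x^2}$ denotes the analytic branch with value $1$ at $x=0$; under $x\mapsto -x$ this same normalization is preserved, so the manipulation above is legitimate as an identity of formal power series (equivalently, of functions analytic near $0$). For the parameters: I would state at the outset that $d=0$ gives $s_0(n)=s(n,0)$, which is $1$ for $n=1$ and $0$ for $n\ge2$, and $s_{-1}(n)=(-1)^{n-1}$ by the $d=-1$ discussion, so the identity reads $0 = \frac{(-1)^{n-1}\cdot0}{1}\cdot(-1)^{n-1}=0$ for $n\ge2$ — true; and $d=-1$ is excluded by hypothesis. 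So the proof is essentially a change of variables in \eqref{s_dGeneratingFunction2} plus extraction of coefficients; the "hard part" is merely the careful sign/branch accounting, and confirming why $n\ge 2$ (not $n\ge 1$) is the correct range.
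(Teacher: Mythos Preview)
Your approach via the closed form \eqref{s_dGeneratingFunction2} is sound and genuinely different from the paper's, which proceeds by Lagrange inversion on $y = x\frac{1-y}{1-(d+1)y}$, reindexes the resulting sum, and invokes the binomial identity $\tfrac{1}{n}\binom{n}{j+1}\binom{n+j-1}{n-1} = s(n,j)+s(n,j+1)$ to recognize $s_{-d-1}(n)$. Your route bypasses all of that: the single observation that the radicand $1-(4d+2)x+x^2$ is invariant under $(d,x)\mapsto(-d-1,-x)$ does the work, and the argument is much shorter.

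That said, your displayed identity $y_{-d-1}(-x) = \frac{-(d+1)}{d}\,y_d(x)$ is \emph{false} as written. The term $1+x$ does not involve $d$, so replacing $d$ by $-d-1$ leaves it alone; the subsequent substitution $x\mapsto -x$ then turns $1+x$ into $1-x$, not back into $1+x$. Carrying this through correctly gives
\[
y_{-d-1}(-x) \;=\; \frac{-1}{2d}\Bigl(1-x-\sqrt{1-(4d+2)x+x^2}\Bigr) \;=\; \frac{-(d+1)}{d}\,y_d(x) \;+\; \frac{x}{d},
\]
with an extra linear term $x/d$. This correction is precisely what forces the restriction $n\ge 2$: extracting $[x^n]$ for $n\ge 2$ annihilates $x/d$ and yields $(-1)^n s_{-d-1}(n) = \frac{-(d+1)}{d}s_d(n)$, which rearranges to the claim. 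You evidently sense this when you mention the ``$+1+x$ versus $-1+x$'' discrepancy, but the equation you actually wrote would, if true, prove the proposition for $n=1$ as well --- and that is false. Once you fix the displayed line, the proof is complete. (For $d=0$, your separate verification is fine, though note $s_{-1}(n)=(-1)^{n-1}\neq 0$; it is the factor $d$ on the right-hand side that makes the identity $0=0$ there.)
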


\begin{proof}
Recall the generating function $y = \sum_{n \geq 1} s_d(n)x^n$. We can rewrite~\eqref{s_dGeneratingFunction}  as the functional equation $y = x \left( \frac{1- y}{1-(d+1)y} \right)$, apply Lagrange inversion, and obtain that
\begin{align*}
s_d(n) =[x^n]y &= \frac{1}{n} [y^{n-1}] \left( \frac{1- y}{1-(d+1)y} \right)^{n} \\
&= \frac{1}{n} [y^{n-1}] \left( \sum_{i \geq 0} \binom{n}{i} (-1)^i \right) \left( \sum_{j \geq 0} \binom{n+j-1}{n-1} (d+1)^j \right)\\
&= \frac{1}{n}  \left( \sum_{i \geq 0} (-1)^i \binom{n}{i}   \binom{2n-i-2}{n-1} (d+1)^{n-i-1} \right).
\end{align*}
Since $\binom{2n-i-2}{n-1} = 0$ when $i > n-1$, the sum could simply run from $i=0$ to $i=n-1$. Moreover, by re-indexing the sum using $j=n-i-1$, we have
\begin{align*}
s_d(n) &=  \frac{1}{n}  \left( \sum_{j = 0}^{n-1} (-1)^{n-j-1} \binom{n}{n-j-1}   \binom{2n-(n-j-1)-2}{n-1} (d+1)^{j} \right)\\
&= (-1)^{n-1} \left( \sum_{j = 0}^{n-1} \frac{1}{n} \binom{n}{j+1}   \binom{n+j-1}{n-1} (-d-1)^{j} \right)
\end{align*}
Now notice that
\begin{align*}
\frac{1}{n}  \binom{n}{j+1}   \binom{n+j-1}{n-1} 
&= \frac{1}{n-1} \left( \binom{n-1}{j} \binom{n+j-1}{n} + \binom{n-1}{j+1} \binom{n+j}{n} \right) \\
&= s(n,j) + s(n,j+1).
\end{align*}
Hence, we have
\begin{align*}
s_d(n) &=  (-1)^{n-1}  \sum_{j = 0}^{n-1} (s(n,j) + s(n,j+1)) (-d-1)^{j}\\
&=  (-1)^{n-1}  \left( \sum_{j = 0}^{n-1} s(n,j) (-d-1)^{j} - \frac{1}{d+1} \sum_{j=0}^{n-1}  s(n,j+1) (-d-1)^{j+1} \right)\\
&= (-1)^{n-1} \frac{d}{d+1} s_{-d-1}(n),
\end{align*}
finishing the proof of our claim. Note that in the second equality above we used the fact that $s(n,0) = s(n,n) = 0$ for all $n \geq 2$, as well as the assumption that $d \neq -1$.
\end{proof}

\ignore{
Proposition~\ref{SdIdentity2} readily implies the following identity for $s(n,k)$:

\begin{corollary}\label{SdIdentity2Cor}
For every real number $d \neq 0, -1$ and integer $n \geq 2$,
\[
\sum_{k=0}^{n-1} \left( d^{k-1} +(-1)^{n-k} (d+1)^{k-1} \right) s(n,k) = 0.
\]
\end{corollary}

\begin{proof}
From Proposition~\ref{SdIdentity2}, we have
\begin{align*}
0 &= s_d(n) - (-1)^{n-1} \frac{d}{d+1} s_{-d-1}(n)\\
&= \sum_{k=0}^{n-1} \left( d^k - (-1)^{n-1} \frac{d}{d+1} (-d-1)^k \right) s(n,k) \\
&= \sum_{k=0}^{n-1} \left( d^k + (-1)^{n-k} d(d+1)^{k-1} \right) s(n,k) \\
&= d \sum_{k=0}^{n-1} \left( d^{k-1} + (-1)^{n-k} (d+1)^{k-1} \right) s(n,k),
\end{align*}
which proves our claim whenever $d \neq 0,-1$.
\end{proof}
}
Notice that Proposition~\ref{SdIdentity2} rearranges to $(-1)^{n-1}s_{-d-1}(n) = \frac{d+1}{d} s_d(n)$ (for $d \neq 0,-1$), and that we encountered the expression $s(n,j) + s(n,j+1)$ in its proof. We note that these quantities have a natural connection with the large Schr\"oder numbers. For every integer $n \geq 1$, we define the set of \emph{large Schr\"oder paths} $\overline{\P}_n$ to be the set of lattice paths from $(0,0)$ to $(2n-2,0)$ that satisfy properties (P1) and (P2) (but not necessarily (P3)) in the definition of small Schr\"oder paths. Thus, $\P_n$ is a subset of $\overline{\P}_n$ for every $n$. Figure~\ref{fig2b} illustrates the large Schr\"oder paths in $\overline{\P}_4$ that do not belong to $\P_4$.

\begin{center}
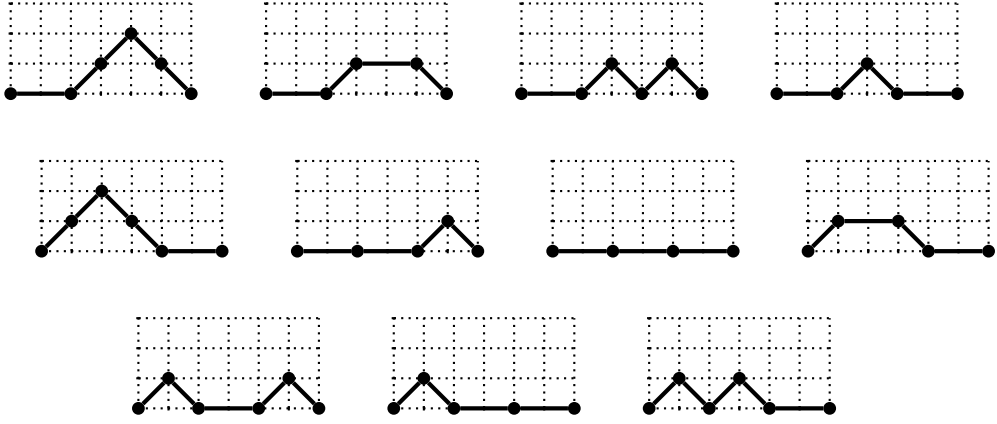
\begin{figure}[h]
\def\fig2scale{0.4}
\[
\begin{array}{c}
\begin{tikzpicture}[scale = \fig2scale, xscale=1,yscale=1, font=\scriptsize\sffamily, thick,main node/.style={circle,inner sep=0.5mm,draw, fill}]
\def\xlb{0}; \def\xub{6}; \def\ylb{0}; \def\yub{3}; \def\buf{0};
\foreach \x in {\xlb ,...,\xub}
{    \ifthenelse{\NOT 0 = \x}{\draw[thick](\x ,-2pt) -- (\x ,2pt);}{}
\draw[dotted, thick](\x,\ylb- \buf) -- (\x,\yub + \buf);}
\foreach \y in {\ylb ,...,\yub}
{    \ifthenelse{\NOT 0 = \y}{\draw[thick](-2pt, \y) -- (2pt, \y);}{}
\draw[dotted, thick](\xlb- \buf, \y) -- (\xub + \buf, \y);}

\node[main node] at (0,0) (0) {};
\node[main node] at (2,0) (1) {};
\node[main node] at (3,1) (2) {};
\node[main node] at (4,2) (3) {};
\node[main node] at (5,1) (4) {};
\node[main node] at (6,0) (5) {};
\draw[ultra thick] (0) -- (1) -- (2) -- (3)-- (4)-- (5);

\end{tikzpicture}
\qquad
\begin{tikzpicture}[scale = \fig2scale, xscale=1,yscale=1, font=\scriptsize\sffamily, thick,main node/.style={circle,inner sep=0.5mm,draw, fill}]
\def\xlb{0}; \def\xub{6}; \def\ylb{0}; \def\yub{3}; \def\buf{0};
\foreach \x in {\xlb ,...,\xub}
{    \ifthenelse{\NOT 0 = \x}{\draw[thick](\x ,-2pt) -- (\x ,2pt);}{}
\draw[dotted, thick](\x,\ylb- \buf) -- (\x,\yub + \buf);}
\foreach \y in {\ylb ,...,\yub}
{    \ifthenelse{\NOT 0 = \y}{\draw[thick](-2pt, \y) -- (2pt, \y);}{}
\draw[dotted, thick](\xlb- \buf, \y) -- (\xub + \buf, \y);}
\node[main node] at (0,0) (0) {};
\node[main node] at (2,0) (1) {};
\node[main node] at (3,1) (2) {};
\node[main node] at (5,1) (3) {};
\node[main node] at (6,0) (4) {};
\draw[ultra thick] (0) -- (1) -- (2) -- (3)-- (4);

\end{tikzpicture}
\qquad
\begin{tikzpicture}[scale = \fig2scale, xscale=1,yscale=1, font=\scriptsize\sffamily, thick,main node/.style={circle,inner sep=0.5mm,draw, fill}]
\def\xlb{0}; \def\xub{6}; \def\ylb{0}; \def\yub{3}; \def\buf{0};
\foreach \x in {\xlb ,...,\xub}
{    \ifthenelse{\NOT 0 = \x}{\draw[thick](\x ,-2pt) -- (\x ,2pt);}{}
\draw[dotted, thick](\x,\ylb- \buf) -- (\x,\yub + \buf);}
\foreach \y in {\ylb ,...,\yub}
{    \ifthenelse{\NOT 0 = \y}{\draw[thick](-2pt, \y) -- (2pt, \y);}{}
\draw[dotted, thick](\xlb- \buf, \y) -- (\xub + \buf, \y);}
\node[main node] at (0,0) (0) {};
\node[main node] at (2,0) (1) {};
\node[main node] at (3,1) (2) {};
\node[main node] at (4,0) (3) {};
\node[main node] at (5,1) (4) {};
\node[main node] at (6,0) (5) {};
\draw[ultra thick] (0) -- (1) -- (2) -- (3)-- (4)-- (5);

\end{tikzpicture}
\qquad
\begin{tikzpicture}[scale = \fig2scale, xscale=1,yscale=1, font=\scriptsize\sffamily, thick,main node/.style={circle,inner sep=0.5mm,draw, fill}]
\def\xlb{0}; \def\xub{6}; \def\ylb{0}; \def\yub{3}; \def\buf{0};
\foreach \x in {\xlb ,...,\xub}
{    \ifthenelse{\NOT 0 = \x}{\draw[thick](\x ,-2pt) -- (\x ,2pt);}{}
\draw[dotted, thick](\x,\ylb- \buf) -- (\x,\yub + \buf);}
\foreach \y in {\ylb ,...,\yub}
{    \ifthenelse{\NOT 0 = \y}{\draw[thick](-2pt, \y) -- (2pt, \y);}{}
\draw[dotted, thick](\xlb- \buf, \y) -- (\xub + \buf, \y);}
\node[main node] at (0,0) (0) {};
\node[main node] at (2,0) (1) {};
\node[main node] at (3,1) (2) {};
\node[main node] at (4,0) (3) {};
\node[main node] at (6,0) (4) {};
\draw[ultra thick] (0) -- (1) -- (2) -- (3)-- (4);

\end{tikzpicture}
\qquad

\\
\\
\begin{tikzpicture}[scale = \fig2scale, xscale=1,yscale=1, font=\scriptsize\sffamily, thick,main node/.style={circle,inner sep=0.5mm,draw, fill}]
\def\xlb{0}; \def\xub{6}; \def\ylb{0}; \def\yub{3}; \def\buf{0};
\foreach \x in {\xlb ,...,\xub}
{    \ifthenelse{\NOT 0 = \x}{\draw[thick](\x ,-2pt) -- (\x ,2pt);}{}
\draw[dotted, thick](\x,\ylb- \buf) -- (\x,\yub + \buf);}
\foreach \y in {\ylb ,...,\yub}
{    \ifthenelse{\NOT 0 = \y}{\draw[thick](-2pt, \y) -- (2pt, \y);}{}
\draw[dotted, thick](\xlb- \buf, \y) -- (\xub + \buf, \y);}
\node[main node] at (0,0) (0) {};
\node[main node] at (1,1) (1) {};
\node[main node] at (2,2) (2) {};
\node[main node] at (3,1) (3) {};
\node[main node] at (4,0) (4) {};
\node[main node] at (6,0) (5) {};
\draw[ultra thick] (0) -- (1) -- (2) -- (3)-- (4)-- (5);

\end{tikzpicture}
\qquad
\begin{tikzpicture}[scale = \fig2scale, xscale=1,yscale=1, font=\scriptsize\sffamily, thick,main node/.style={circle,inner sep=0.5mm,draw, fill}]
\def\xlb{0}; \def\xub{6}; \def\ylb{0}; \def\yub{3}; \def\buf{0};
\foreach \x in {\xlb ,...,\xub}
{    \ifthenelse{\NOT 0 = \x}{\draw[thick](\x ,-2pt) -- (\x ,2pt);}{}
\draw[dotted, thick](\x,\ylb- \buf) -- (\x,\yub + \buf);}
\foreach \y in {\ylb ,...,\yub}
{    \ifthenelse{\NOT 0 = \y}{\draw[thick](-2pt, \y) -- (2pt, \y);}{}
\draw[dotted, thick](\xlb- \buf, \y) -- (\xub + \buf, \y);}
\node[main node] at (0,0) (0) {};
\node[main node] at (2,0) (1) {};
\node[main node] at (4,0) (2) {};
\node[main node] at (5,1) (3) {};
\node[main node] at (6,0) (4) {};
\draw[ultra thick] (0) -- (1) -- (2) -- (3)-- (4);

\end{tikzpicture}
\qquad
\begin{tikzpicture}[scale = \fig2scale, xscale=1,yscale=1, font=\scriptsize\sffamily, thick,main node/.style={circle,inner sep=0.5mm,draw, fill}]
\def\xlb{0}; \def\xub{6}; \def\ylb{0}; \def\yub{3}; \def\buf{0};
\foreach \x in {\xlb ,...,\xub}
{    \ifthenelse{\NOT 0 = \x}{\draw[thick](\x ,-2pt) -- (\x ,2pt);}{}
\draw[dotted, thick](\x,\ylb- \buf) -- (\x,\yub + \buf);}
\foreach \y in {\ylb ,...,\yub}
{    \ifthenelse{\NOT 0 = \y}{\draw[thick](-2pt, \y) -- (2pt, \y);}{}
\draw[dotted, thick](\xlb- \buf, \y) -- (\xub + \buf, \y);}
\node[main node] at (0,0) (0) {};
\node[main node] at (2,0) (1) {};
\node[main node] at (4,0) (2) {};
\node[main node] at (6,0) (3) {};
\draw[ultra thick] (0) -- (1) -- (2) -- (3);

\end{tikzpicture}
\qquad
\begin{tikzpicture}[scale = \fig2scale, xscale=1,yscale=1, font=\scriptsize\sffamily, thick,main node/.style={circle,inner sep=0.5mm,draw, fill}]
\def\xlb{0}; \def\xub{6}; \def\ylb{0}; \def\yub{3}; \def\buf{0};
\foreach \x in {\xlb ,...,\xub}
{    \ifthenelse{\NOT 0 = \x}{\draw[thick](\x ,-2pt) -- (\x ,2pt);}{}
\draw[dotted, thick](\x,\ylb- \buf) -- (\x,\yub + \buf);}
\foreach \y in {\ylb ,...,\yub}
{    \ifthenelse{\NOT 0 = \y}{\draw[thick](-2pt, \y) -- (2pt, \y);}{}
\draw[dotted, thick](\xlb- \buf, \y) -- (\xub + \buf, \y);}
\node[main node] at (0,0) (0) {};
\node[main node] at (1,1) (1) {};
\node[main node] at (3,1) (2) {};
\node[main node] at (4,0) (3) {};
\node[main node] at (6,0) (4) {};
\draw[ultra thick] (0) -- (1) -- (2) -- (3)-- (4);

\end{tikzpicture}
\\
\\
\begin{tikzpicture}[scale = \fig2scale, xscale=1,yscale=1, font=\scriptsize\sffamily, thick,main node/.style={circle,inner sep=0.5mm,draw, fill}]
\def\xlb{0}; \def\xub{6}; \def\ylb{0}; \def\yub{3}; \def\buf{0};
\foreach \x in {\xlb ,...,\xub}
{    \ifthenelse{\NOT 0 = \x}{\draw[thick](\x ,-2pt) -- (\x ,2pt);}{}
\draw[dotted, thick](\x,\ylb- \buf) -- (\x,\yub + \buf);}
\foreach \y in {\ylb ,...,\yub}
{    \ifthenelse{\NOT 0 = \y}{\draw[thick](-2pt, \y) -- (2pt, \y);}{}
\draw[dotted, thick](\xlb- \buf, \y) -- (\xub + \buf, \y);}
\node[main node] at (0,0) (0) {};
\node[main node] at (1,1) (1) {};
\node[main node] at (2,0) (2) {};
\node[main node] at (4,0) (3) {};
\node[main node] at (5,1) (4) {};
\node[main node] at (6,0) (5) {};
\draw[ultra thick] (0) -- (1) -- (2) -- (3)-- (4)-- (5);

\end{tikzpicture}
\qquad
\begin{tikzpicture}[scale = \fig2scale, xscale=1,yscale=1, font=\scriptsize\sffamily, thick,main node/.style={circle,inner sep=0.5mm,draw, fill}]
\def\xlb{0}; \def\xub{6}; \def\ylb{0}; \def\yub{3}; \def\buf{0};
\foreach \x in {\xlb ,...,\xub}
{    \ifthenelse{\NOT 0 = \x}{\draw[thick](\x ,-2pt) -- (\x ,2pt);}{}
\draw[dotted, thick](\x,\ylb- \buf) -- (\x,\yub + \buf);}
\foreach \y in {\ylb ,...,\yub}
{    \ifthenelse{\NOT 0 = \y}{\draw[thick](-2pt, \y) -- (2pt, \y);}{}
\draw[dotted, thick](\xlb- \buf, \y) -- (\xub + \buf, \y);}
\node[main node] at (0,0) (0) {};
\node[main node] at (1,1) (1) {};
\node[main node] at (2,0) (2) {};
\node[main node] at (4,0) (3) {};
\node[main node] at (6,0) (4) {};
\draw[ultra thick] (0) -- (1) -- (2) -- (3)-- (4);

\end{tikzpicture}
\qquad
\begin{tikzpicture}[scale = \fig2scale, xscale=1,yscale=1, font=\scriptsize\sffamily, thick,main node/.style={circle,inner sep=0.5mm,draw, fill}]
\def\xlb{0}; \def\xub{6}; \def\ylb{0}; \def\yub{3}; \def\buf{0};
\foreach \x in {\xlb ,...,\xub}
{    \ifthenelse{\NOT 0 = \x}{\draw[thick](\x ,-2pt) -- (\x ,2pt);}{}
\draw[dotted, thick](\x,\ylb- \buf) -- (\x,\yub + \buf);}
\foreach \y in {\ylb ,...,\yub}
{    \ifthenelse{\NOT 0 = \y}{\draw[thick](-2pt, \y) -- (2pt, \y);}{}
\draw[dotted, thick](\xlb- \buf, \y) -- (\xub + \buf, \y);}
\node[main node] at (0,0) (0) {};
\node[main node] at (1,1) (1) {};
\node[main node] at (2,0) (2) {};
\node[main node] at (3,1) (3) {};
\node[main node] at (4,0) (4) {};
\node[main node] at (6,0) (5) {};
\draw[ultra thick] (0) -- (1) -- (2) -- (3)-- (4)-- (5);

\end{tikzpicture}
\qquad

\end{array}
\]

\caption{The $11$ large Schr\"oder paths in $\overline{\P}_4 \setminus \P_4$}\label{fig2b}
\end{figure}
\end{center}

Let $\overline{s}(n) = |\overline{\P}_n|$. Then $\overline{s}(n)$ gives the large Schr\"oder numbers (\seqnum{A006318} in the OEIS):

\begin{center}
\begin{tabular}{l|rrrrrrrrr}
$n$ & 1 & 2 & 3 & 4 & 5 & 6 & 7 & 8 & $\cdots$ \\
\hline
$\overline{s}(n)$ & 1 & 2 & 6 & 22 & 90 & 394 & 1806 & 8558 & $\cdots$ \\
\end{tabular}
\end{center}

It is well known that $\overline{s}(n) = 2s(n)$ for every $n \geq 2$. To see this, consider a path $P \in \overline{\P_n} \setminus \P_n$, and write $P = P_1FP_2$ where $P_1$ does not contain a flat step on the $x$-axis. Then it is easy to show that the mapping $\beta : \overline{\P}_n \setminus \P_n  \to \P_n$ where $\beta(P) = P_1UP_2D$ is a bijection, thus showing that $|\overline{\P}_n| = 2 |\P_n|$. Moreover, notice that $\beta(P)$ has exactly one more up step than $P$. Thus, if we let $\overline{\P}_{n,k}$ denote the set of paths in $\overline{\P}_n$ with exactly $k$ up steps, then we have
\[
\overline{\P}_{n,k} = \P_{n,k} \cup \set{ P : \beta(P) \in \P_{n,k+1}}.
\]
The sets $\P_{n,k}$ and $\set{ P : \beta(P) \in \P_{n,k+1}}$ are obviously disjoint. Thus, if we let $\overline{s}(n,k) = | \overline{\P}_{n,k}|$, we have
\[
\overline{s}(n,k) = s(n,k) + s(n,k+1)
\]
for every $n \geq 2$ and every $k \in \set{0, \ldots, n-1}$. Furthermore, given a real number $d$, if we define the weighted large Schr\"oder number $\overline{s}_d(n) = \sum_{k=0}^{n-1} \overline{s}(n,k) d^k$, then we see that
\[
\overline{s}_d(n) = \sum_{k=0}^{n-1} \overline{s}(n,k) d^k = \sum_{k=0}^{n-1} (s(n,k) + s(n,k+1)) d^k = \frac{d+1}{d} s_d(n).
\]
Thus, Proposition~\ref{SdIdentity2} can be written as simply
\[
\overline{s}_d(n) = (-1)^{n-1} s_{-d-1}(n)
\]
for all $d \neq 0,-1$ and $n \geq 2$. We will revisit Proposition~\ref{SdIdentity2} from the perspective of Dyck paths in the next section.

\section{$s_d(n)$ in terms of Dyck paths}\label{sec13}

We next discuss several families of Dyck paths that are counted by $s_d(n)$. Recall that a Dyck path is a small Schr\"oder path with only up and down steps (i.e., no flat steps). Let  $\D_n$ be the set of Dyck paths that starts at $(0,0)$ and ends at $(2n-2,0)$. (Equivalently, $\D_n = \P_{n,n-1}$.) Also, it will be convenient to have the following notation for our subsequent discussion: Given a path $P \in \P_n$, let
\begin{itemize}
\item
$U(P)$, $F(P)$, and $D(P)$ respectively denote the number of up, flat, and down steps in $P$;
\item
$V(P)$ denote the number valleys (i.e., occurrences of $DU$) in $P$;
\item
$K(P)$ denote the number peaks (i.e., occurrences of $UD$) in $P$;
\item
$U_V(P)$ (resp., $D_V(P)$) denote the number of up (resp., down) steps in $P$ that is not contained in a valley;
\item
$U_K(P)$ (resp., $D_K(P)$) denote the number of up (resp., down) steps in $P$ that is not contained in a peak.
\end{itemize}

Recently, Chen and Pan~\cite{ChenP17} studied the following notion of weighted Catalan numbers: Given real numbers $a$ and $b$, define
\begin{equation}\label{D_nab}
c^V_{a,b}(n) = \sum_{P \in \D_n} a^{U_V(P)} b^{V(P)}.
\end{equation}
Then $c^V_{1,1}(n)$ gives the ordinary Catalan numbers $c(n)$. Interestingly, $c^V_{a,b}(n)$ also coincides with $s_d(n)$ for certain choices of $a$ and $b$.

\begin{proposition}\label{s_dValley}
For every real number $d$ and integer $n \geq 1$,
\[
s_d(n) = c^V_{d, d+1}(n).
\]
\end{proposition}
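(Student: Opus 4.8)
The plan is to show that the generating function $C(x) = \sum_{n \geq 1} c^V_{d,d+1}(n) x^n$ satisfies the same functional equation~\eqref{s_dGeneratingFunction} as $y = \sum_{n \geq 1} s_d(n) x^n$, together with the same initial condition $[x^0]C = 0$, and then invoke the uniqueness of the power series solution. Equivalently (and perhaps more transparently), I would give a direct bijective/weight-preserving argument between Schr\"oder trees in $\T_n$ with weight $d^{k(T)}$ and Dyck paths in $\D_n$ with weight $d^{U_V(P)}(d+1)^{V(P)}$, passing through the small Schr\"oder paths $\P_n$ via the map $\Psi$ of Definition~\ref{defnTreeToPath}. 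I will sketch the generating-function route as the main line, since it keeps the bookkeeping cleanest.

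First I would set up the decomposition of a nonempty Dyck path $P \in \D_n$ by its \emph{first return} to the $x$-axis, writing $P = U P' D P''$ where $P' \in \D_j$ for some $j \geq 1$ and $P'' \in \D_{n-j}$ (with $P''$ possibly empty, i.e. $n - j = 0$ interpreted as contributing the empty path of ``size'' accounting for the endpoint). The key is to track how the statistics $U_V$ and $V$ behave under this concatenation. A valley $DU$ in $P$ is created exactly at each internal junction where a down step is immediately followed by an up step; concatenating $UP'D$ with $P''$ creates one new valley precisely when $P''$ is nonempty (the final $D$ of the first block meets the initial $U$ of $P''$), and the leading $U$ together with $P'$ contributes an extra non-valley up step exactly when $P'$ is empty. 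Carefully translating this into a relation among the weighted counts, one gets a quadratic functional equation for $C(x)$; the bookkeeping of the ``$+1$'' shifts in the exponents of $d$ versus $d+1$ is exactly what produces the coefficient $(d+1)$ in front of $C^2$ and matches~\eqref{s_dGeneratingFunction}. I expect this step — getting the valley/non-valley-up-step accounting at the two concatenation seams exactly right, including the degenerate cases where $P'$ or $P''$ is empty — to be the main obstacle; everything else is formal.

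Alternatively, and as a cleaner way to dispatch that obstacle, I would prove the identity bijectively. Given $T \in \T_{n,k}$, consider $\Psi(T) \in \P_{n,k}$, which has $k$ up steps and $k$ down steps among $F$'s, with no $F$ on the $x$-axis. I would then define a map $\P_n \to \D_n$ that replaces each flat step $F$ by either $UD$ (a new peak) or, when it sits adjacent to appropriate existing steps, splices it into the path so as to create a valley — in a way that is reversible and that sends the weight $d^{k}$ on $\P_{n,k}$ to $d^{U_V}(d+1)^{V}$ after summing over the $2$-or-so local choices per flat step. Concretely, the expansion $\frac{1-y}{1-(d+1)y} = 1 + d\sum_{n\geq 1}(d+1)^{n-1}y^n$ appearing in the proof of Proposition~\ref{s_dAsymptotic} already exhibits the ``one factor of $d$, then factors of $d+1$'' pattern; the Dyck-path side should realize this as: each maximal run at a given ``level'' contributes one up step outside a valley (weight $d$) and then each additional nesting contributes a valley (weight $d+1$). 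Matching this run-structure on Dyck paths with the $\ell \geq 2$ subtree decomposition of $\T_n$ used in the proof of Proposition~\ref{s_dGeneratingFunctionProp} gives the bijection directly.

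In either approach I would finish by noting that both $s_d(n)$ and $c^V_{d,d+1}(n)$ equal $1$ when $n = 1$ (the empty Dyck path, $U_V = V = 0$), so the initial conditions agree; since a functional equation of the form $(d+1)C^2 - (x+1)C + x = 0$ has a unique formal power series solution with zero constant term (the coefficients are determined recursively once $d \neq -1$, and the case $d = -1$ is handled as in the text via $C = x/(1-x)$, which one checks directly equals $\sum (-1)^{n-1} x^n$ on the Dyck side using the involution $\alpha$ from the proof of Proposition~\ref{s_MinusOne} restricted to $\D_n$), we conclude $C = y$ and hence $s_d(n) = c^V_{d,d+1}(n)$ for all $n \geq 1$.
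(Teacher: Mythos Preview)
Your generating-function route is valid and genuinely different from the paper's argument. If you carry out the first-return decomposition $P = U P' D P''$ carefully, the seam accounting you flagged as the obstacle does come out cleanly: when $P''$ is empty one gets $w(P) = d\,w(P')$, and when $P''$ is nonempty one gets $w(P) = (d+1)\,w(P')\,w(P'')$ (the first up step of $P''$ moves from ``non-valley'' to ``valley'', trading a factor of $d$ for a factor of $d+1$). Summing yields exactly $(d+1)C^2 - (x+1)C + x = 0$, and uniqueness finishes it. So that half of your plan would succeed once written out.

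The paper, however, bypasses all of this with a two-line combinatorial identity that you nearly stumble onto but don't quite reach. It expands $(d+1)^{V(P)} = \sum_{S} d^{|S|}$ over subsets $S$ of the valleys of $P$, and observes that the pair $(P,S)$ corresponds bijectively to a small Schr\"oder path $f_S(P)\in\P_n$ obtained by flattening each valley \emph{not} in $S$ to an $F$; the resulting path has exactly $U_V(P)+|S|$ up steps, so $\sum_{P\in\D_n} d^{U_V(P)}(d+1)^{V(P)} = \sum_{Q\in\P_n} d^{U(Q)} = s_d(n)$ immediately. This works uniformly for all real $d$ with no case split. Compared with your approach, the paper's argument is shorter, avoids the seam bookkeeping entirely, and gives an explicit weight-preserving correspondence rather than an equality of generating functions.

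Two places in your sketch are actually off. First, your bijective paragraph has the mechanism backwards and garbled: replacing $F$ by $UD$ creates a \emph{peak}, not a valley, and the ``splicing'' you describe is not specified well enough to be reversible; the correct map (as in the paper) goes from Dyck paths to Schr\"oder paths by flattening a chosen subset of valleys, not from Schr\"oder to Dyck by inflating flats. Second, your handling of $d=-1$ invokes ``the involution $\alpha$ \ldots\ restricted to $\D_n$'', but $\alpha$ swaps the first valley with the first flat step, and Dyck paths have no flat steps, so this restriction does not make sense. What you actually need there is trivial: $c^V_{-1,0}(n) = \sum_{P\in\D_n} (-1)^{U_V(P)} 0^{V(P)}$, and only $U^{n-1}D^{n-1}$ survives, giving $(-1)^{n-1}$ directly.
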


\begin{proof}
Given a path $P \in \D_n$, let $S \subseteq \set{1, \ldots, V(P)}$ be an arbitrary subset of the valleys of $P$. Define the function $f_S : \D_n \to \P_n$ such that $f_S(P)$ is the path obtained from replacing every valley not in $S$ by a flat step $F$. Observe that by this construction, the resulting path $f_S(P)$ is a small Schr\"oder path with $U_V(P) + |S|$ up steps.

Now observe that, for every path $P \in \D_n$,
\begin{equation}\label{s_dValley1}
d^{U_V(P)} (d+1)^{V(P)} = \sum_{S \subseteq \set{1, \ldots, V(P)}} d^{U_V(P) + |S|} =  \sum_{S \subseteq \set{1, \ldots, V(P)}} d^{U(f_S(P))}
\end{equation}
Also, every path  $Q \in \P_n$ can be written as $f_S(P)$ for a unique choice of $P \in \D_n$ and subset $S$ (namely, let $P$ be the path with all flat steps in $Q$ replaced by valleys, and let $S$ be the set of valleys in $P$ that are valleys in $Q$). Thus, if we sum over all paths in $\D_n$ on both sides of~\eqref{s_dValley1}, we obtain
\[
c^V_{d,d+1}(n) = \sum_{P \in \D_n} d^{U_V(P)} (d+1)^{V(P)} = \sum_{Q \in \P_n} d^{U(Q)} = s_d(n).
\]
\end{proof}

We remark that Proposition~\ref{s_dValley} also follows from~\cite[eq.\ (1.15)]{ChenP17} and our discussion in Section~\ref{sec12b3} showing that $\overline{s}_d(n) = \frac{d+1}{d} s_d(n)$. Also, notice that when $d$ is a positive integer, we obtain that $s_d(n)$ counts the number ways to construct a Dyck path from $(0,0)$ to $(2n-2,0)$ where each valley can be painted one of $d+1$ colors, and each up step that does not belong to a valley can be painted one of $d$ colors. Moreover, it is obvious that 
\[
c^V_{-1,0}(n) = \sum_{P \in \D_n} (-1)^{U_V(P)} (0)^{V(P)} = (-1)^{n-1}
\]
 for ever $n \geq 1$, since the only Dyck path in $\D_n$ with no valleys is the path $U^{n-1} D^{n-1}$. Thus, in the case of $d=-1$, Proposition~\ref{s_dValley} implies Proposition~\ref{s_MinusOne}.
 
Next, we show that $c^V_{a,b}(n)$ can be related to $s_d(n)$ even when $b \neq a+1$.

\begin{corollary}\label{c^Vab}
For every integer $n \geq 1$ and  every real number $a,b$ where $a \neq b$,
\[
c^V_{a,b}(n) = (b-a)^{n-1} s_{a/(b-a)}(n).
\]
\end{corollary}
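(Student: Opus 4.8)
The plan is to obtain this as a quick corollary of Proposition~\ref{s_dValley}, using a single combinatorial identity about Dyck paths. The identity I would record first is that for every $P \in \D_n$ we have $U_V(P) + V(P) = U(P) = n-1$: an up step of $P$ is ``contained in a valley'' exactly when it is immediately preceded by a down step, each valley $DU$ contains precisely one such up step, and distinct valleys contain distinct up steps, so the $n-1$ up steps split into $V(P)$ that lie in a valley and $U_V(P) = (n-1) - V(P)$ that do not.

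With this in hand, I would set $d = \frac{a}{b-a}$ (well defined since $a \neq b$), observe that $d + 1 = \frac{b}{b-a}$, and invoke Proposition~\ref{s_dValley} to write
\[
s_d(n) = c^V_{d, d+1}(n) = \sum_{P \in \D_n} \left( \frac{a}{b-a} \right)^{U_V(P)} \left( \frac{b}{b-a} \right)^{V(P)}.
\]
Multiplying both sides by $(b-a)^{n-1}$ and pulling that factor inside the sum, the identity $U_V(P) + V(P) = n-1$ makes every power of $b-a$ cancel:
\[
(b-a)^{n-1} s_d(n) = \sum_{P \in \D_n} (b-a)^{\,n-1 - U_V(P) - V(P)} \, a^{U_V(P)} b^{V(P)} = \sum_{P \in \D_n} a^{U_V(P)} b^{V(P)} = c^V_{a,b}(n),
\]
which is exactly the claimed formula.

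I do not expect a genuine obstacle: the whole argument is the bookkeeping identity plus a one-line substitution. The only points needing a sentence of care are the small or degenerate cases --- $n = 1$, where $\D_1$ consists of the empty path and both sides equal $1$; and $a = 0$ (equivalently $d = 0$), where one notes that the cancellation above never requires dividing by $a$, so Proposition~\ref{s_dValley} applies verbatim. I would also remark that taking $b = a+1$ recovers Proposition~\ref{s_dValley} itself, a convenient consistency check.
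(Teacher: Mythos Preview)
Your proposal is correct and follows essentially the same approach as the paper: both arguments use the identity $U_V(P)+V(P)=n-1$ to rescale the weights in $c^V_{a,b}(n)$ by $(b-a)^{n-1}$ and then invoke Proposition~\ref{s_dValley}. The only cosmetic difference is direction---the paper starts from $c^V_{a,b}(n)$ and factors out $(b-a)^{n-1}$, whereas you start from $s_d(n)$ and multiply it in---together with your added remarks on the degenerate cases.
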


\begin{proof}
\begin{align*}
c^V_{a,b}(n) &= \sum_{P \in \D_n} a^{U_V(P)} b^{V(P)} \\
&= (b-a)^{n-1} \sum_{P \in \D_n} \left( \frac{a}{b-a} \right)^{U_V(P)} \left(\frac{a}{b-a} +1\right)^{V(P)} \\
&= (b-a)^{n-1} c^V_{a/(b-a). a/(b-a) +1}(n)\\
&= (b-a)^{n-1} s_{a/(b-a)}(n).
\end{align*}
Note that the second equality follows from the fact that $U_V(P) + V(P) = U(P) = n-1$ for all $P \in \D_n$.
\end{proof}

Corollary~\ref{c^Vab} readily implies that certain families of small Schr\"oder paths and Dyck paths have the same size. For instance:

\begin{corollary}\label{s_dValleyCor}
For all integers $m,n\geq 1$, the following quantities are both equal to $m^{n-1} s_{1/m}(n)$:
\begin{itemize}
\item[(i)]
the number of small Schr\"oder paths in $\P_n$ where each flat step can be painted one of $m$ colors;
\item[(ii)]
the number of Dyck paths in $\D_n$ where each valley can be painted one of $m+1$ colors.
\end{itemize}
\end{corollary}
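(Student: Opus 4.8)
The plan is to reduce each of the two counts to one of the weighted Catalan numbers $c^V_{a,b}(n)$ and then quote Corollary~\ref{c^Vab}; both reductions are short bookkeeping arguments.

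For part (ii), a Dyck path $P \in \D_n$ in which each valley is painted one of $m+1$ colors contributes $(m+1)^{V(P)}$ to the count, so the quantity in (ii) is
\[
\sum_{P \in \D_n} (m+1)^{V(P)} = \sum_{P \in \D_n} 1^{U_V(P)} (m+1)^{V(P)} = c^V_{1,\,m+1}(n).
\]
Applying Corollary~\ref{c^Vab} with $a = 1$ and $b = m+1$ (so that $b - a = m \neq 0$) immediately yields $c^V_{1,m+1}(n) = m^{n-1} s_{1/m}(n)$, which is the claim for (ii).

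For part (i), I would first recall the identity $s_d(n) = \sum_{Q \in \P_n} d^{U(Q)}$ established in the proof of Proposition~\ref{s_dValley}. Next, since every $Q \in \P_n$ runs from $(0,0)$ to $(2n-2,0)$ with $U$, $D$ steps of horizontal length $1$ and $F$ steps of horizontal length $2$, and since $U(Q) = D(Q)$ (the path ends at height $0$), we get $2U(Q) + 2F(Q) = 2n-2$, i.e.\ $F(Q) = n - 1 - U(Q)$. Therefore the quantity in (i) is
\[
\sum_{Q \in \P_n} m^{F(Q)} = \sum_{Q \in \P_n} m^{\,n-1-U(Q)} = m^{n-1} \sum_{Q \in \P_n} (1/m)^{U(Q)} = m^{n-1} s_{1/m}(n).
\]
(As an alternative that makes (i) $=$ (ii) transparent, one can use the map $f_S$ from the proof of Proposition~\ref{s_dValley}: specifying a flat-coloured path as in (i) amounts to choosing a Dyck path $P \in \D_n$ and, at each of its valleys, either one of $m$ ``flat colours'' or the option ``leave this valley in place'' --- again one of $m+1$ choices --- so the total is $\sum_{P \in \D_n}(m+1)^{V(P)}$.)

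I do not expect any real obstacle here: the only points that need care are the elementary length identity $U(Q) + F(Q) = n-1$ on $\P_n$ and the correct specialization $a = 1,\ b = m+1$ in Corollary~\ref{c^Vab}; once these are in place, both equalities are immediate.
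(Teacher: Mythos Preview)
Your proof is correct and follows essentially the same approach as the paper: both parts reduce to the identities $c^V_{1,m+1}(n) = m^{n-1}s_{1/m}(n)$ (via Corollary~\ref{c^Vab}) and $\sum_{Q\in\P_n} m^{F(Q)} = m^{n-1}\sum_{Q\in\P_n}(1/m)^{U(Q)}$ (via $U(Q)+F(Q)=n-1$). Your argument is slightly more detailed in justifying the length identity and adds a nice optional bijective remark, but the substance is identical.
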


\begin{proof}
From Corollary~\ref{c^Vab}, we obtain that 
\[
m^{n-1} s_{1/m}(n) = c^V_{1,m+1}(n),
\]
showing (ii). For (i),
\[
m^{n-1} s_{1/m}(n) = m^{n-1} \sum_{P \in \P_n} (1/m)^{U(P)} 1^{F(P)} = \sum_{P \in \P_n} m^{F(P)},
\]
since $U(P) + F(P) = n-1$ for every $P \in \P_n$. This finishes the proof.
\end{proof}

We next study another variant of weighted Catalan numbers that, as we shall see, is equal to $c^V_{a,b}(n)$. Given real numbers $a$ and $b$, define
\[
c^K_{a,b}(n) = \sum_{P \in \D_n} a^{K(P)} b^{U_K(P)}.
\]
Then we have the following:

\begin{proposition}\label{s_dPeak}
For all real numbers $a,b$ and integer $n \geq 1$
\[
c^V_{a,b}(n) = c^K_{a,b}(n).
\]
\end{proposition}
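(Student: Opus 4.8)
The plan is to reduce the claimed identity --- an equality of two polynomials in $a$ and $b$ --- to the classical symmetry of the Narayana numbers. The case $n=1$ is immediate: $\D_1$ consists of the empty path, so both sides equal $1$. Assume henceforth $n\ge 2$, and set $m:=n-1$, the common semilength of the paths in $\D_n$. For a nonempty Dyck path $P\in\D_n$ I would first record two elementary observations. (a) Along $P$ the peaks and the valleys alternate, with the first and last turn being peaks, so $K(P)=V(P)+1$. (b) Each valley $DU$ of $P$ contributes a distinct up step (its second step), and an up step lies in a valley exactly when it is immediately preceded by a down step; hence exactly $V(P)$ of the $n-1$ up steps of $P$ lie in a valley, so $U_V(P)=(n-1)-V(P)$, and similarly $U_K(P)=(n-1)-K(P)=(n-2)-V(P)$.

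Substituting these relations makes each of the two sums a function of the single statistic $V(P)$. Writing $d_{n,j}$ for the number of paths in $\D_n$ with exactly $j$ valleys, observations (a) and (b) give
\[
c^V_{a,b}(n)=\sum_{j=0}^{n-2} d_{n,j}\,a^{\,n-1-j}b^{\,j},\qquad
c^K_{a,b}(n)=\sum_{j=0}^{n-2} d_{n,j}\,a^{\,j+1}b^{\,n-2-j}.
\]
Re-indexing the second sum by $i=n-2-j$ rewrites it as $\sum_{i=0}^{n-2} d_{n,\,n-2-i}\,a^{\,n-1-i}b^{\,i}$, so comparing the coefficient of $a^{\,n-1-i}b^{\,i}$ on the two sides shows that the proposition is equivalent to $d_{n,j}=d_{n,\,n-2-j}$ for all $0\le j\le n-2$; via (a) this is exactly the assertion that the number of Dyck paths of semilength $m$ with $k$ peaks equals the number with $m+1-k$ peaks.

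Finally I would supply this peak-symmetry in whichever form fits the paper best. The quickest route is to invoke the standard enumeration of Dyck paths of semilength $n-1$ by number of peaks, $d_{n,j}=\frac{1}{n-1}\binom{n-1}{j}\binom{n-1}{j+1}$ (the Narayana number), which is manifestly invariant under $j\mapsto n-2-j$ since $\binom{n-1}{j}\binom{n-1}{j+1}=\binom{n-1}{n-1-j}\binom{n-1}{n-2-j}$. In keeping with the bijective flavour of the earlier proofs, one could instead exhibit an explicit involution $\phi$ on $\D_n$ with $V(\phi(P))=n-2-V(P)$ --- for instance the map induced on Dyck paths by Kreweras complementation of noncrossing partitions through a bijection carrying peaks to blocks (using $|\pi|+|\mathrm{Kr}(\pi)|=m+1$), or a variant of the Lalanne--Kreweras involution. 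Such a $\phi$ in fact sends the pair $(U_V(P),V(P))$ directly to $(K(\phi(P)),U_K(\phi(P)))$, giving a fully bijective proof. This last step is the only substantial one: everything before it is bookkeeping, whereas proving the peak-symmetry from scratch requires a genuine (if standard) bijection, so the write-up should either cite it or budget room to describe $\phi$ and verify $V(\phi(P))=n-2-V(P)$.
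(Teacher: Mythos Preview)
Your proposal is correct, and the reduction you give---that $c^V_{a,b}(n)=c^K_{a,b}(n)$ is \emph{equivalent} to the symmetry $d_{n,j}=d_{n,\,n-2-j}$ of the Narayana distribution---is clean and fully justified by the bookkeeping in (a) and (b). Citing the explicit Narayana formula $d_{n,j}=\frac{1}{n-1}\binom{n-1}{j}\binom{n-1}{j+1}$ then finishes the proof.

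The paper takes a different, self-contained route: it builds an explicit bijection $\gamma:\D_n\to\D_n$ by composing two tree-to-path encodings of full binary trees (the usual counterclockwise walk $\Psi$ and its clockwise mirror $\Psi'$), and proves directly from the recursive structure of binary trees that $K(\Psi(T))=U_V(\Psi'(T))$ and $U_K(\Psi(T))=V(\Psi'(T))$. In effect the paper constructs, from scratch, a concrete map realising the Narayana symmetry (one checks $K(P)+K(\gamma(P))=n$), rather than invoking it. Your approach is shorter and makes the equivalence with Narayana symmetry transparent, at the cost of importing that symmetry as a known fact; the paper's approach is more self-contained and supplies a new explicit bijection, but hides the connection to Narayana numbers (which only surface later in the section). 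Either is fine; if you go your route, note that the paper introduces the Narayana numbers \emph{after} this proposition, so you would need to move that material forward or cite it externally.
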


To prove Proposition~\ref{s_dPeak}, we need another tree-to-path procedure. Recall the mapping $\Psi$ described in Definition~\ref{defnTreeToPath}, where we turn a tree into a path by walking counterclockwise around the tree starting at the root. The following mapping $\Psi'$ is very similar to $\Psi$, except we walk clockwise around the tree this time.

\begin{definition}\label{defnTreeToPath2}
Given a Schr\"oder tree $T \in \T_n$, construct the path $\Psi'(T) \in \P_n$ as follows:
\begin{enumerate}
\item
Suppose $T$ has $q$ nodes. We perform an postorder traversal of $T$ in reverse (i.e., we recursively traverse the tree in the order of root, then right, then left), and label the nodes $a_1, a_2, \ldots, a_q$ in that order.
\item
Notice that $a_1$ must be the root of $T$. For every $i \geq 2$, define the function
\[
\psi'(a_i) = 
\begin{cases}
U & \tn{if $a_i$ is the rightmost child of its parent;}\\
D & \tn{if $a_i$ is the leftmost child of its parent;}\\
F & \tn{otherwise.}
\end{cases}
\]
\item
Define
\[
\Psi'(T) =\psi'(a_2) \psi'(a_3)  \cdots \psi'(a_q).
\]
\end{enumerate}

\end{definition}

Figure~\ref{fig3b} illustrates the mapping $\Psi'$ applied to the same tree shown in Figure~\ref{fig3}.

\begin{center}
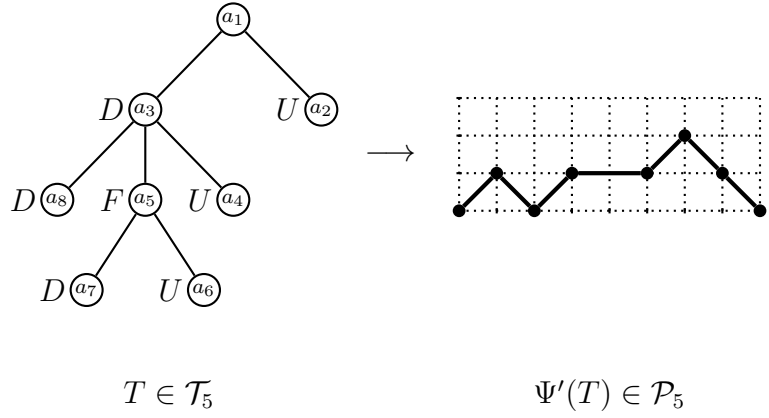
\begin{figure}[h]
\[
\begin{array}{ccc}

\begin{tikzpicture}
[scale=0.6,xscale=1.3, thick,main node/.style={circle,inner sep=0.3mm,draw,font=\scriptsize\sffamily}]
  \node[main node] at (3.5,6) (0) {$a_1$};
    \node[main node, label={[label distance=-0.1cm]180:$U$}] at (5,4) (r) {$a_2$};
    \node[main node, label={[label distance=-0.1cm]180:$D$}] at (2,4) (l) {$a_3$};
    \node[main node, label={[label distance=-0.1cm]180:$U$}] at (3.5,2) (l3){$a_4$};
    \node[main node, label={[label distance=-0.1cm]180:$F$}] at (2,2) (l2){$a_5$};
    \node[main node, label={[label distance=-0.1cm]180:$U$}] at (3,0) (l22){$a_6$};
    \node[main node, label={[label distance=-0.1cm]180:$D$}] at (1,0) (l21){$a_7$};
    \node[main node, label={[label distance=-0.1cm]180:$D$}] at (0.5,2) (l1){$a_8$};

  \path[every node/.style={font=\sffamily}]
    (0) edge (l)
    (0) edge (r)
                (l) edge (l1)
                 (l) edge (l2)
            (l) edge (l3)
(l2) edge (l21)
(l2) edge (l22);
\end{tikzpicture}

&
\raisebox{2cm}{$\longrightarrow$}
&
\raisebox{1.25cm}{
\begin{tikzpicture}[scale = 0.5, xscale=1,yscale=1, font=\scriptsize\sffamily, thick,main node/.style={circle,inner sep=0.5mm,draw, fill}]
\def\xlb{0}; \def\xub{8}; \def\ylb{0}; \def\yub{3}; \def\buf{0};
\foreach \x in {\xlb ,...,\xub}
{    \ifthenelse{\NOT 0 = \x}{\draw[thick](\x ,-2pt) -- (\x ,2pt);}{}
\draw[dotted, thick](\x,\ylb- \buf) -- (\x,\yub + \buf);}
\foreach \y in {\ylb ,...,\yub}
{    \ifthenelse{\NOT 0 = \y}{\draw[thick](-2pt, \y) -- (2pt, \y);}{}
\draw[dotted, thick](\xlb- \buf, \y) -- (\xub + \buf, \y);}

\node[main node] at (0,0) (0) {};
\node[main node] at (1,1) (1) {};
\node[main node] at (2,0) (2) {};
\node[main node] at (3,1) (3) {};
\node[main node] at (5,1) (4) {};
\node[main node] at (6,2) (5) {};
\node[main node] at (7,1) (6) {};
\node[main node] at (8,0) (7) {};
\draw[ultra thick] (0) -- (1) -- (2) -- (3)-- (4)-- (5)-- (6)-- (7);

\end{tikzpicture}
}
\\
\\
T \in \T_5 & & \Psi'(T) \in \P_5\\
\end{array}
\]
\caption{Illustrating the mapping $\Psi'$ (Definition~\ref{defnTreeToPath2})}\label{fig3b}
\end{figure}
\end{center}

For essentially the same reasons why $\Psi : \T_n \to \P_n$ is a bijection, $\Psi'$ is a bijection between these two sets as well. Moreover, both $\Psi, \Psi'$ are in fact bijections between $\T_{n,n-1}$ (which are necessarily full binary trees) and $\P_{n,n-1} = \D_n$. The follow result relates various statistics of the Dyck paths obtained from applying $\Psi$ and $\Psi'$ to the same binary tree.

\begin{lemma}\label{psiPsiPrime}
For every tree $T \in \T_{n,n-1}$ where $n \geq 2$, we have
\begin{align}
\label{psiPsiPrime1}& K( \Psi(T)) + K( \Psi'(T)) = n,\\
\label{psiPsiPrime2}& K( \Psi(T))   = U_V( \Psi'(T)),\\
\label{psiPsiPrime3}& U_K( \Psi(T))   = V( \Psi'(T)).
\end{align}
\end{lemma}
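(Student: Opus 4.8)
The plan is to prove the three identities of Lemma~\ref{psiPsiPrime} by induction on $n$, exploiting the recursive structure of a full binary tree $T \in \T_{n,n-1}$: namely $T$ is a root with a left subtree $T_L \in \T_{n_L, n_L - 1}$ and a right subtree $T_R \in \T_{n_R, n_R-1}$, where $n_L + n_R = n$. The base case $n = 2$ is the single cherry: $\Psi(T) = \Psi'(T) = UD$, so $K(\Psi(T)) = K(\Psi'(T)) = 1$, giving $K(\Psi(T)) + K(\Psi'(T)) = 2 = n$, and $U_V = V = 0$ on both sides of \eqref{psiPsiPrime2} and \eqref{psiPsiPrime3}. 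The crux is to work out how $\Psi$ and $\Psi'$ decompose along the root split, then track each of the five statistics $K$, $U_V$, $V$ (and implicitly $U_K$) through the concatenation.

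First I would record the decomposition formulas. Walking counterclockwise, $\Psi(T)$ visits the root, then traverses $T_L$ contributing $U\,\Psi(T_L)\,D$-ish structure — more precisely, since the leftmost edge of the root goes to the root of $T_L$, that first step is $U$, and the last step into $T_R$'s completion is $D$; the standard ``walk around the tree'' decomposition gives $\Psi(T) = U\,\Psi(T_L)\,D\,U\,\Psi(T_R)\,D$ when we are careful, but because $T_L, T_R$ are themselves full binary trees one checks that in fact $\Psi(T) = U\, \Psi(T_L)\, \overline{\Psi(T_R)}$ for an appropriate reading; rather than guess, I would derive the precise splitting directly from Definition~\ref{defnTreeToPath} by noting that the preorder traversal of $T$ is (root, preorder of $T_L$, preorder of $T_R$), so $\Psi(T) = U \cdot \Psi(T_L) \cdot D \cdot \Psi(T_R) \cdot D$ is wrong on step-count grounds — the correct statement is that the children of the root are exactly two nodes (left gets $U$, right gets $D$), so $\Psi(T) = U\,\big(\text{path of }T_L\big)\,D\,\big(\text{path of }T_R\big)$ where the ``path of $T_L$'' means $\Psi$ applied to the subtree but recorded relative to $T_L$'s own root, i.e.\ literally $\Psi(T_L)$. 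Thus $\Psi(T) = U\,\Psi(T_L)\,D\,\Psi(T_R)$. Symmetrically, since $\Psi'$ uses the reverse-postorder (root, then right subtree, then left subtree) and assigns $U$ to the rightmost child and $D$ to the leftmost, $\Psi'(T) = U\,\Psi'(T_R)\,D\,\Psi'(T_L)$.

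With these two decompositions in hand, the inductive step is bookkeeping. For \eqref{psiPsiPrime1}: in $\Psi(T) = U\,\Psi(T_L)\,D\,\Psi(T_R)$, the peaks are those internal to $\Psi(T_L)$, those internal to $\Psi(T_R)$, plus a possible new peak at the $U$ preceding $\Psi(T_L)$ (it is a peak iff $\Psi(T_L)$ starts with $D$, i.e.\ $n_L = 1$ — but $n_L \geq 1$ and $T_L$ a full binary tree means $\Psi(T_L)$ is empty iff $n_L = 1$, in which case the leaf $T_L$ contributes no steps and the $U$ is immediately followed by the $D$, forming one peak), and a possible peak at the $D$ junction (it is a peak iff preceded by a $U$, which happens iff $\Psi(T_L)$ is empty or ends in $U$...). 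I would organize this by splitting into the cases $n_L = 1$, $n_R = 1$, $n_L, n_R \geq 2$ and carefully count the ``boundary'' peaks created by the root-level $U$ and $D$; the same analysis applied to $\Psi'(T) = U\,\Psi'(T_R)\,D\,\Psi'(T_L)$ and adding should telescope to exactly $n_L + n_R = n$ using the two inductive hypotheses $K(\Psi(T_L)) + K(\Psi'(T_L)) = n_L$ and $K(\Psi(T_R)) + K(\Psi'(T_R)) = n_R$. The key cancellation is that a boundary peak that appears in the $\Psi$ picture corresponds to a non-peak in the $\Psi'$ picture and vice versa, so the boundary contributions sum to a constant. For \eqref{psiPsiPrime2} and \eqref{psiPsiPrime3} I would similarly track $U_V$ and $V$ of $\Psi'(T)$ against $K$ and $U_K$ of $\Psi(T)$: a valley $DU$ in $\Psi'(T)$ either sits inside $\Psi'(T_R)$, inside $\Psi'(T_L)$, or straddles the $D$ junction between them, and one matches each such contribution with a peak (resp.\ non-peak-up-step) in $\Psi(T)$ via the induction hypothesis, again handled by the same case split on whether $n_L$ or $n_R$ equals $1$.

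The main obstacle I anticipate is getting the boundary terms exactly right — i.e.\ precisely which of the two root-level steps $U$ and $D$ in each decomposition creates or destroys a peak/valley, and making sure the degenerate cases where $\Psi(T_L)$ or $\Psi(T_R)$ is the empty word (when the corresponding subtree is a single leaf, $n_L = 1$ or $n_R = 1$) are consistent with the formulas. One clean way to tame this is to prove slightly stronger ``decorated'' statements that also record the first and last step of $\Psi(T)$ and $\Psi'(T)$ (both always begin with $U$ and end with $D$ for $n \geq 2$, which is immediate), so that the junction analysis only needs the first/last letters of the subtree paths; then the case analysis becomes a short finite check. An alternative, possibly slicker, route would be to bypass induction entirely: establish \eqref{psiPsiPrime2} and \eqref{psiPsiPrime3} by exhibiting, for a fixed binary tree $T$, an explicit correspondence between peaks of $\Psi(T)$ and certain features of $T$ (a peak $UD$ in $\Psi(T)$ corresponds exactly to an internal node both of whose children are leaves, reading left-child-leaf then right-child-leaf), do the same for $U_V(\Psi'(T))$ in terms of $T$, and observe the two descriptions of tree-features coincide; then \eqref{psiPsiPrime1} follows since $K(\Psi(T))$ counts one class of internal nodes and $K(\Psi'(T))$ — by the mirror symmetry swapping left/right — counts the complementary class, and a full binary tree on $n$ leaves has $n-1$ internal nodes, $\ldots$ with the ``$+1$'' coming from the root being counted appropriately. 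I would attempt the structural/combinatorial route first and fall back on the explicit induction if the node-feature dictionary gets delicate.
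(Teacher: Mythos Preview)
Your approach for \eqref{psiPsiPrime1} matches the paper's exactly: induction on $n$ via the decompositions $\Psi(T) = U\,\Psi(T_L)\,D\,\Psi(T_R)$ and $\Psi'(T) = U\,\Psi'(T_R)\,D\,\Psi'(T_L)$, with the observation that the subtree paths either begin with $U$ and end with $D$ or are empty. (Your explicit attention to the degenerate cases $n_L=1$ or $n_R=1$ is in fact more careful than the paper's write-up, which asserts $K(\Psi(T)) = K(\Psi(T_L)) + K(\Psi(T_R))$ without separating out the empty-subpath case.)

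Where you diverge is on \eqref{psiPsiPrime2} and \eqref{psiPsiPrime3}. You plan a parallel inductive bookkeeping, tracking valleys and non-valley up-steps through the same case split; this would work but is unnecessary. The paper instead deduces both identities from \eqref{psiPsiPrime1} in two lines, using only the elementary relations $V(P) = K(P) - 1$ and $U_K(P) + K(P) = U(P) = U_V(P) + V(P) = n-1$, valid for any nonempty Dyck path $P \in \D_n$. Combining these with $K(\Psi(T)) + K(\Psi'(T)) = n$ immediately forces $K(\Psi(T)) = U_V(\Psi'(T))$ and $U_K(\Psi(T)) = V(\Psi'(T))$, bypassing all further case analysis.
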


\begin{proof}
We first prove~\eqref{psiPsiPrime1} by induction on $n$. When $n=2$, there is a unique binary tree $T$ in $\T_{2,1}$ and it is easy to check that $K(\Psi(T)) + K(\Psi'(T)) = 1+1=2$, so the base case holds. For the inductive step, suppose the root of $T$ has left subtree $T_1$ and right subtree $T_2$, where $T_1, T_2$ have respectively $n_1$ and $n_2$ leaves.

Now observe that $\Psi(T) = U \Psi(T_1) D \Psi(T_2)$. Since both $\Psi(T_1)$ and $\Psi(T_2)$ are Dyck paths in their own rights, each must either start with a $U$ and end with a $D$, or is empty. Hence, we obtain that $K( \Psi(T)) = K(\Psi(T_1)) + K(\Psi(T_2))$.

Similarly, we see that $\Psi'(T) = U \Psi'(T_2) D \Psi'(T_1)$, and  $K(\Psi'(T)) =  K(\Psi'(T_1)) + K(\Psi'(T_2))$. Thus, using the inductive hypothesis, we obtain
\[
K(\Psi(T)) + K(\Psi'(T)) = K(\Psi(T_1)) + K(\Psi(T_2)) +  K(\Psi'(T_1)) + K(\Psi'(T_2)) = n_1 + n_2 = n.
\]
This proves~\eqref{psiPsiPrime1}.

We next prove~\eqref{psiPsiPrime2} and~\eqref{psiPsiPrime3}. Notice that $V(P) = K(P) - 1$ for every $P \in \D_n$. Thus, applying~\eqref{psiPsiPrime1} gives
\[
K(\Psi(T)) + V(\Psi'(T)) = n-1.
\]
Furthermore, since $U(\Psi(T)) = U(\Psi'(T)) = n-1$, we have
\begin{align*}
U_K(\Psi(T)) + K(\Psi(T)) &= n-1,\\
U_V(\Psi'(T)) + V(\Psi'(T)) &= n-1.
\end{align*}
This implies that $K(\Psi(T)) = U_V(\Psi'(T))$ and $U_K(\Psi(T)) = V(\Psi'(T))$, as desired.
\end{proof}

We are now ready to prove Proposition~\ref{s_dPeak}.

\begin{proof}[Proof of Proposition~\ref{s_dPeak}]
Define the mapping $\gamma : \D_n \to \D_n$ where $\gamma(P) = \Psi' ( \Psi^{-1}(P))$. Since $\Psi, \Psi' : \T_{n,n-1} \to \D_n$ are both bijections, $\gamma$ is a bijection between $\D_n$ and itself. Hence,
\[
c^K_{a,b}(n) = \sum_{P \in \D_n} a^{K(P)} b^{U_K(P)} =  \sum_{P \in \D_n} a^{U_V(\gamma(P))} b^{V(\gamma(P))} = c^V_{a,b}(n),
\]
as claimed. Note that the second equality follows from Lemma~\ref{psiPsiPrime}.
\end{proof}

We next point out how Propositions~\ref{s_MinusOneHalf} and~\ref{SdIdentity2} in Section~\ref{sec12b} can be alternatively shown using the connections between $s_d(n), c^V_{a,b}(n)$, and $c^K_{a,b}(n)$ established above. First, we revisit Propsition~\ref{s_MinusOneHalf} and the sequence $s_{-1/2}(n)$. For every integer $n \geq 2$ and $k \in \set{0,\ldots, n-1}$, define
\[
c(n,k) = \frac{1}{n-1} \binom{n-1}{k-1}\binom{n-1}{k},
\]
and let $c(1,0) = 1$. These are known as the Narayana numbers (\seqnum{A090181} in the OEIS), and $c(n,k)$ counts the number of Dyck paths in $\D_n$ with exactly $k$ up steps.

\begin{center}
\begin{tabular}{r|rrrrrrr}
$c(n,k)$ & $k= 0$ & 1 & 2 & 3 & 4 & 5 & $\cdots$ \\
\hline
$n=1$  & 1 & & & & & &\\
2  &0 &1 & & & & &\\
3  &0 & 1& 1& & & &\\
4  &0 &1 & 3& 1& && \\
5  &0 & 1&6 &6 &1 && \\
6  &0  &1 &10 &20 &10 &1 &\\
$\vdots$  & $\vdots$ &$\vdots$ &$\vdots$ &$\vdots$ &$\vdots$ &$\vdots$ &$ \ddots$ \\
\end{tabular}
\end{center}

If we define the generating function
\[
C_d(x) = \sum_{n \geq 1} \sum_{ k \geq 0} c(n,k) d^k x^n,
\]
then it is not hard to check that $C_d(x)$ satisfies the functional equation
\[
C_d(x)^2 + (dx-x-1)C_d(x) + x = 0.
\]
(See, for instance,~\cite[Exercise 6.36b]{Stanley99} for the details of the derivation.) Solving the above gives
\[
C_d(x) = \frac{ 1+x-dx - \sqrt{ (dx-x-1)^2 - 4x}}{2}.
\]
When $d= -1, 1$, this specializes to
\[
C_1(x) = \frac{1-\sqrt{1-4x}}{2}, C_{-1}(x) = \frac{1+2x- \sqrt{1+4x^2}}{2}.
\]
Obviously, $C_1(x)$ is the generating function for the ordinary Catalan numbers, and so $[x^n]C_1(x) = c(n)$ for every $n \geq 1$. Also notice that $C_1(-x^2) = C_{-1}(x) - x$. Putting things together, we see that, for every $n \geq 2$,
\begin{align*}
2^{n-1} s_{-1/2}(n) &= c^K_{-1,1}(n) =\sum_{P \in \D_n} (-1)^{K(P)} = \sum_{P \in \D_n} c(n,k)(-1)^k \\
&= [x^n] C_{-1}(x) = [x^n] C_1(-x^2)  \\
&=
\begin{cases}
0 & \tn{if $n$ is odd;}\\
c(n/2) & \tn{if $n \equiv 0$ (mod $4$);}\\
-c(n/2) & \tn{if $n \equiv 2$ (mod $4$,}
\end{cases}
\end{align*}
which aligns with Proposition~\ref{s_MinusOneHalf}. Moreover, the fact that $2^{n-1}s_{-1/2}(n) = c^K_{-1,1}(n)$ also implies that
\[
2^{n-1}s_{-1/2}(n) = \sum_{k~\tn{even}} c(n,k) + \sum_{k~\tn{odd}} c(n,k)
\]
for all $n \geq 1$. Thus, we see that given a fixed length, the number of Dyck paths with an odd number of peaks and that with an even number of peaks are either identical, or differ by a Catalan number. This produces the following two sequences:

\begin{center}
\begin{tabular}{l|rrrrrrrrrrr}
$n$ & 1 & 2 & 3 & 4 & 5 & 6 & 7 & 8 & 9 & 10 & $\cdots$ \\
\hline
$\left| \set{P \in \D_n : K(P)~\tn{is even}} \right| $ & 0 & 0 & 1 & 3 & 7 & 20 & 66 & 217 & 715 & 2424 & $\cdots$\\
$\left|\set{ P \in \D_n : K(P)~\tn{is odd}} \right| $ & 1 & 1 & 1 & 2 & 7 & 22 & 66 & 212& 715 & 2438& $\cdots$\\
\end{tabular}
\end{center}

These two sequences, with the terms for $n=1$ removed, give~\seqnum{A071688} and~\seqnum{A071684} in the OEIS, respectively.

Next, we re-consider Proposition~\ref{SdIdentity2}. Notice that for every real number $m \neq 1$,
\[
c^K_{m,1}(n) = \sum_{P \in \D_n} m^{K(P)} = m \sum_{P \in \D_n} m^{V(P)} = m c^V_{1,m}(n).
\]
Thus, applying Corollary~\ref{c^Vab} and substituting $d = \frac{1}{m-1}$, we obtain
\[
m c^V_{1,m}(n) = m(m-1)^{n-1} s_{1/(m-1)}(n) = \left( \frac{d+1}{d}\right) d^{-(n-1)} s_{d}(n)
\]
Likewise, we also have
\[
c^K_{m,1}(n) = (1-m)^{n-1} s_{m/(1-m)}(n) = (-1)^{n-1} d^{-(n-1)} s_{-d-1}(n),
\]
which yields a proof of Proposition~\ref{SdIdentity2} for all cases where $d \neq 0$.

Finally, inspired by David Scambler's comments in the OEIS, we show how $s_d(n)$ counts families of Dyck paths with certain forbidden peak types. Scambler claimed (in \seqnum{A107841}, slightly paraphrased here) that $s_2(n)$ counts the number of Dyck paths from $(0,0)$ to $(2n-2,0)$ with 3 types of up steps $U_1, U_2, U_3$, one type of down step $D$, and avoid $U_1D$. Similarly, he commented (\seqnum{A131763}) that $s_3(n)$ gives the number of Dyck paths from $(0,0)$ to $(2n-2,0)$ with two types of up steps $U_1, U_2$, two types of down steps $D_1, D_2$, and avoid $U_1D_1$. Independently, Geffner and Noy~\cite[Theorem 3]{GeffnerN17} showed that $s(n)$ counts the number of Dyck paths in $\D_n$ with one type of up step $U$, two types of down steps $D_1, D_2$, and avoid $UD_1$. We show that a generalization of these statements follow readily from our findings above.

\begin{proposition}\label{s_dScambler}
Given integers $k,\ell \geq 1$, let $D_{k,\ell}(n)$ denote the number of ways to construct Dyck paths from $(0,0)$ to $(2n-2,0)$ with $k$ types of up steps $U_1, \ldots, U_k$, $\ell$ types of down steps $D_1, \ldots, D_{\ell}$, and avoid peaks of the type $U_1D_1$. Then 
\[
D_{k,\ell}(n) = s_{k\ell -1}(n).
\]
for every $n \geq 1$.
\end{proposition}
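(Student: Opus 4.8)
The plan is to enumerate the colored Dyck paths counted by $D_{k,\ell}(n)$ by first fixing the underlying uncolored Dyck path $P \in \D_n$ and then counting the admissible colorings of $P$, thereby writing $D_{k,\ell}(n)$ as a sum over $\D_n$ of a product of the peak statistics $K(P)$ and $U_K(P)$. This should land exactly on $c^K_{k\ell-1,\,k\ell}(n)$, after which Proposition~\ref{s_dPeak} and Proposition~\ref{s_dValley} close the argument.

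First I would record an elementary structural fact: in any $P \in \D_n$, every up step is either immediately followed by a down step (hence is the up step of a peak) or immediately followed by another up step, and dually for down steps. Consequently the peaks of $P$ account for exactly $K(P)$ up steps and $K(P)$ down steps, the non-peak up steps number $U_K(P) = U(P) - K(P) = (n-1) - K(P)$, and the non-peak down steps number $D(P) - K(P) = (n-1) - K(P)$, which equals $U_K(P)$. Moreover each peak corresponds to a consecutive (up step, down step) pair, so ``coloring a peak'' means jointly assigning colors to its up step and its down step.

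Next I would count the admissible colorings of a fixed $P$: each of the $U_K(P)$ non-peak up steps independently receives one of $k$ colors, each of the $U_K(P)$ non-peak down steps receives one of $\ell$ colors, and for each of the $K(P)$ peaks the pair of colors of its up and down steps may be chosen in any of the $k\ell$ ways except the forbidden pair $(U_1,D_1)$, i.e.\ in $k\ell-1$ ways. Since these choices are mutually independent, $P$ admits
\[
(k\ell-1)^{K(P)} \cdot k^{U_K(P)} \cdot \ell^{U_K(P)} = (k\ell-1)^{K(P)} (k\ell)^{U_K(P)}
\]
colorings. Summing over $P \in \D_n$ gives
\[
D_{k,\ell}(n) = \sum_{P \in \D_n} (k\ell-1)^{K(P)} (k\ell)^{U_K(P)} = c^K_{k\ell-1,\,k\ell}(n),
\]
and then Proposition~\ref{s_dPeak} yields $c^K_{k\ell-1,\,k\ell}(n) = c^V_{k\ell-1,\,k\ell}(n)$, while Proposition~\ref{s_dValley} applied with $d = k\ell-1$ (so that $d+1 = k\ell$) gives $c^V_{k\ell-1,\,k\ell}(n) = s_{k\ell-1}(n)$; the case $n=1$ is trivial since both sides equal $1$.

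I do not expect a genuine obstacle here once the Dyck-path machinery of Section~\ref{sec13} is available: the proof is essentially a bookkeeping argument. The one point requiring care is the observation that $D_K(P) = U_K(P)$ (both equal $n-1-K(P)$), since this is exactly what lets the separate factors $k^{U_K(P)}$ and $\ell^{D_K(P)}$ collapse into $(k\ell)^{U_K(P)}$ and hence match the definition of $c^K_{a,b}(n)$, together with the (easy but worth stating) verification that the coloring choices at distinct peaks and at the non-peak steps are genuinely independent, so that the count factors as claimed.
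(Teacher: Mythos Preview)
Your proposal is correct and follows essentially the same approach as the paper: reinterpret the coloring count as $(k\ell-1)^{K(P)} k^{U_K(P)} \ell^{D_K(P)}$, use $U_K(P)=D_K(P)$ to collapse this to $c^K_{k\ell-1,k\ell}(n)$, and then apply Propositions~\ref{s_dPeak} and~\ref{s_dValley}. Your write-up is slightly more explicit about the structural bookkeeping (why $D_K(P)=U_K(P)=n-1-K(P)$ and why the choices are independent), but the argument is the same.
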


\begin{proof}
Notice that $D_{k, \ell}(n)$ can be reinterpreted as the number of Dyck paths where
\begin{itemize}
\item
each of the $K(P)$ peaks of $P$ is assigned $k \ell - 1$ colors (since there are a total of $k \ell$ types of possible peaks, $1$ of which is forbidden);
\item
each of the $U_K(P)$ up steps not contained in a peak is assigned one of  $k$ colors;
\item
each of the $D_K(P)$ down steps not contained in a peak is assigned one of $\ell$ colors.
\end{itemize}
Hence, we obtain that
\begin{align*}
D_{k,\ell}(n) 
&=  \sum_{P \in \D_n} k^{U_K(P)} (k\ell-1)^{K(P)} \ell^{D_K(P)}\\
&=  \sum_{P \in \D_n}  (k \ell)^{U_K(P)} (k\ell-1)^{K(P)} \\
&= c^K_{ k\ell -1, k \ell}(n) \\
&= c^V_{ k\ell -1, k \ell}(n) \\
&= s_{k\ell-1}(n).
\end{align*}
Note that the second equality above is due to $U_K(P) = D_K(P)$ for every Dyck path $P$, and the last two equalities are due to Propositions~\ref{s_dValley} and~\ref{s_dPeak}, respectively.
\end{proof}

Note that the above argument can be easily extended to cases where more than $1$ of the $k\l$ possible peak types are forbidden. More precisely, one could show that the number of Dyck paths from $(0,0)$ to $(2n-2,0)$ with $k$ types of up steps, $\l$ types of down steps, with $p$ of the $k\ell$ peak types forbidden, is $c^K_{k\ell -p,  k \ell}(n) = p^{n-1} s_{(k\ell-p)/p}(n)$.

\section{Concluding Remarks}\label{sec14}

In this manuscript, we looked at $s_d(n)$, a natural generalization of the small Schr\"oder numbers that had made cameos in a number of contexts but was never the main focus of study until now. We also saw that the study of $s_d(n)$ led to implications for familiar combinatorial objects such as Schr\"oder paths and Dyck paths.

We remark that there are many natural combinatorial interpretations of $s_d(n)$ that we have yet to mention. For instance, as we have seen with Schr\"oder trees and small Schr\"oder paths, any combinatorial interpretation of $s(n,k)$ readily extends to a corresponding set of objects counted by $s_d(n)$ for positive integers $d$. For another example, it is known~\cite{Stanley96} that $s(n,k)$ counts the number of ways to subdivide the regular $(n+1)$-gon into $k$ regions using non-crossing diagonals. Therefore, $s_d(n)$ gives the number of ways to subdivide the regular $(n+1)$-gon and color each of the regions using one of $d$ colors. 

\begin{center}
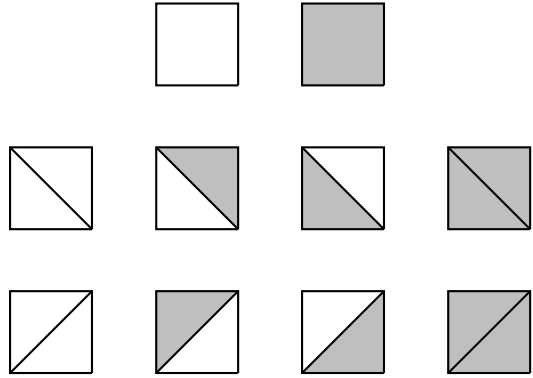
\begin{figure}[h]

\[
\begin{array}{c}
\begin{tikzpicture}[scale = 0.77]
\draw[thick]
({cos(315)}, {sin(315)}) --({cos(45)}, {sin(45)}) --({cos(135)}, {sin(135)}) --({cos(225)}, {sin(225)}) --({cos(315)}, {sin(315)});
\end{tikzpicture}
\qquad 
\begin{tikzpicture}[scale = 0.77]
\filldraw[lightgray] 
({cos(315)}, {sin(315)}) --({cos(45)}, {sin(45)}) --({cos(135)}, {sin(135)}) --({cos(225)}, {sin(225)}) --({cos(315)}, {sin(315)});
\draw[thick]
({cos(315)}, {sin(315)}) --({cos(45)}, {sin(45)}) --({cos(135)}, {sin(135)}) --({cos(225)}, {sin(225)}) --({cos(315)}, {sin(315)});
\end{tikzpicture}
\\
\\
\begin{tikzpicture}[scale = 0.77]
\draw[thick]
({cos(315)}, {sin(315)}) --({cos(45)}, {sin(45)}) --({cos(135)}, {sin(135)}) --({cos(225)}, {sin(225)}) --({cos(315)}, {sin(315)});
\draw[thick]
({cos(315)}, {sin(315)}) -- ({cos(135)}, {sin(135)});
\end{tikzpicture}
\qquad 
\begin{tikzpicture}[scale = 0.77]
\filldraw[lightgray] 
({cos(315)}, {sin(315)}) -- ({cos(135)}, {sin(135)}) -- ({cos(45)}, {sin(45)}) -- ({cos(315)}, {sin(315)});

\draw[thick]
({cos(315)}, {sin(315)}) --({cos(45)}, {sin(45)}) --({cos(135)}, {sin(135)}) --({cos(225)}, {sin(225)}) --({cos(315)}, {sin(315)});
\draw[thick]
({cos(315)}, {sin(315)}) -- ({cos(135)}, {sin(135)});
\end{tikzpicture}
\qquad 
\begin{tikzpicture}[scale = 0.77]
\filldraw[lightgray] 
({cos(315)}, {sin(315)}) -- ({cos(135)}, {sin(135)}) -- ({cos(225)}, {sin(225)}) -- ({cos(315)}, {sin(315)});
\draw[thick]
({cos(315)}, {sin(315)}) --({cos(45)}, {sin(45)}) --({cos(135)}, {sin(135)}) --({cos(225)}, {sin(225)}) --({cos(315)}, {sin(315)});
\draw[thick]
({cos(315)}, {sin(315)}) -- ({cos(135)}, {sin(135)});
\end{tikzpicture}
\qquad 
\begin{tikzpicture}[scale = 0.77]
\filldraw[lightgray] 
({cos(315)}, {sin(315)}) --({cos(45)}, {sin(45)}) --({cos(135)}, {sin(135)}) --({cos(225)}, {sin(225)}) --({cos(315)}, {sin(315)});

\draw[thick]
({cos(315)}, {sin(315)}) --({cos(45)}, {sin(45)}) --({cos(135)}, {sin(135)}) --({cos(225)}, {sin(225)}) --({cos(315)}, {sin(315)});
\draw[thick]
({cos(315)}, {sin(315)}) -- ({cos(135)}, {sin(135)});
\end{tikzpicture}
\\
\\
\begin{tikzpicture}[scale = 0.77]
\draw[thick]
({cos(315)}, {sin(315)}) --({cos(45)}, {sin(45)}) --({cos(135)}, {sin(135)}) --({cos(225)}, {sin(225)}) --({cos(315)}, {sin(315)});
\draw[thick]
({cos(45)}, {sin(45)}) -- ({cos(225)}, {sin(225)});
\end{tikzpicture}
\qquad
\begin{tikzpicture}[scale = 0.77]
\filldraw[lightgray] 
({cos(45)}, {sin(45)}) --({cos(225)}, {sin(225)}) -- ({cos(135)}, {sin(135)}) -- ({cos(45)}, {sin(45)});
\draw[thick]
({cos(315)}, {sin(315)}) --({cos(45)}, {sin(45)}) --({cos(135)}, {sin(135)}) --({cos(225)}, {sin(225)}) --({cos(315)}, {sin(315)});
\draw[thick]
({cos(45)}, {sin(45)}) -- ({cos(225)}, {sin(225)});
\end{tikzpicture}
\qquad
\begin{tikzpicture}[scale = 0.77]
\filldraw[lightgray] 
({cos(45)}, {sin(45)}) --({cos(225)}, {sin(225)}) -- ({cos(315)}, {sin(315)}) -- ({cos(45)}, {sin(45)});
\draw[thick]
({cos(315)}, {sin(315)}) --({cos(45)}, {sin(45)}) --({cos(135)}, {sin(135)}) --({cos(225)}, {sin(225)}) --({cos(315)}, {sin(315)});
\draw[thick]
({cos(45)}, {sin(45)}) -- ({cos(225)}, {sin(225)});
\end{tikzpicture}
\qquad
\begin{tikzpicture}[scale = 0.77]
\filldraw[lightgray] 
({cos(315)}, {sin(315)}) --({cos(45)}, {sin(45)}) --({cos(135)}, {sin(135)}) --({cos(225)}, {sin(225)}) --({cos(315)}, {sin(315)});

\draw[thick]
({cos(315)}, {sin(315)}) --({cos(45)}, {sin(45)}) --({cos(135)}, {sin(135)}) --({cos(225)}, {sin(225)}) --({cos(315)}, {sin(315)});
\draw[thick]
({cos(45)}, {sin(45)}) -- ({cos(225)}, {sin(225)});
\end{tikzpicture}
\end{array}
\]
\caption{The $s_2(3) = 10$ ways to 2-color subdivisions of a square}\label{figNgon}
\end{figure}
\end{center}

Other interpretations of $s(n,k)$ (e.g., in terms of standard Young tableaux~\cite{Stanley96}, and loopless outerplanar maps~\cite{GeffnerN17}) can be extended similarly, and it is possible that $s_d(n)$ --- or other generalizations of related integer sequences --- can lend new perspectives to solving combinatorial problems involving these familiar quantities.

\section{Acknowledgments}
\label{sec7}

Finally, we express our gratitude to Neil J.\ A.\ Sloane and the numerous contributors 
to the On-Line Encyclopedia of Integer Sequences for developing and curating 
such a valuable resource for mathematical discovery.

\bigskip
\hrule
\bigskip

\noindent 
2010 \emph{Mathematics Subject Classification}:~Primary 05A15. 
Secondary 05A16, 05A19.

\medskip

\noindent 
\emph{Keywords}:~Schr\"oder number,
Schr\"oder path,
Catalan number,
Dyck path,
Narayana number,
plane rooted tree. 

\bigskip
\hrule
\bigskip

\noindent 
Concerned with sequences
\seqnum{A000108},
\seqnum{A001003},
\seqnum{A006318},
\seqnum{A071684},
\seqnum{A071688},
\seqnum{A086810},
\seqnum{A090181},
\seqnum{A107841},
\seqnum{A131763},
\seqnum{A131765}.
\bigskip
\hrule
\bigskip

\end{document}